\documentclass[12pt]{article}
\usepackage{amssymb}
\usepackage{amsmath}
\usepackage{amsthm}
\usepackage{yhmath}
\usepackage{mathdots}
\usepackage{MnSymbol}
\usepackage{color}
\usepackage[left=2.5cm,top=2cm,right=2cm]{geometry}
\usepackage{enumitem}
\usepackage{nccmath}
\usepackage{blkarray}
\usepackage{multirow}
\usepackage{float}
\usepackage{booktabs}
\usepackage[square,numbers]{natbib}
\bibliographystyle{plain}

\usepackage{graphicx}
\graphicspath{ {/Users/neha/Desktop/NLA P2/c2_CMA} }
\usepackage{subfigure}


\usepackage{natbib}
\usepackage[english]{babel}
\usepackage{algorithm}
\usepackage{algpseudocode}

\newtheorem{theorem}{\bf Theorem}[section]

\newtheorem{corollary}[theorem]{\bf Corollary}
\newtheorem{exam}[theorem]{\bf Example}
\newtheorem{remark}[theorem]{\bf Remark}
\newtheorem{lemma}[theorem]{\bf Lemma}

\SetLabelAlign{CenterWithParen}{\hfil(\makebox[1.0em]{#1})\hfil}
\def \R{{\mathbb R}}
\def \C{{\mathbb C}}
\def \rank{\mathrm{rank}}

\def \QR{\mathbb{Q}_\mathbb{R}}
\def \i{\textit{\textbf{i}}}
\def \j{\textit{\textbf{j}}}
\def \k{\textit{\textbf{k}}}
\newcommand\norm[1]{\left\lVert#1\right\rVert}
\newcommand{\Rn}[1]{%
	\textup{\lowercase\expandafter{\romannumeral#1}}%
}

\def \diag{\mathrm{diag}}

\newcommand{\beano}{\begin{eqnarray*}}	\newcommand{\eeano}{\end{eqnarray*}}

\newcommand{\mLabel}[1]{\mbox{$\scriptstyle{#1}$}}



\renewcommand{\thefootnote}{\fnsymbol{footnote}}

\date{\today}
\title{Algebraic technique for mixed least squares and total least squares problem in the reduced biquaternion algebra
	
	\footnotemark[2]}
\author{Sk. Safique Ahmad\footnotemark[1] \and Neha Bhadala \footnotemark[3]}

\begin{document}
	\maketitle
	\begin{abstract}
		This paper presents the reduced biquaternion mixed least squares and total least squares (RBMTLS) method for solving an overdetermined  system $AX \approx B$ in the reduced biquaternion algebra. The RBMTLS method is suitable when matrix $B$ and a few columns of matrix $A$ contain errors. By examining real representations of reduced biquaternion matrices, we investigate the conditions for the existence and uniqueness of the real RBMTLS solution and derive an explicit expression for the real RBMTLS solution. The proposed technique covers two special cases: the reduced biquaternion total least squares (RBTLS) method and the reduced biquaternion least squares (RBLS) method. Furthermore, the developed method is also used to find the best approximate solution to $AX \approx B$ over a complex field. Lastly, a numerical example is presented to support our findings.

	\end{abstract}
	
	\noindent {\bf Keywords.} Multidimensional mixed least squares and total least squares problem, Linear approximation problem, Reduced biquaternion matrix, Real representation matrix.
	
	\noindent {\bf AMS subject classification.} 15A60, 15B33, 65F20, 65F45.
	
	\renewcommand{\thefootnote}{\fnsymbol{footnote}}
	
	\footnotetext[1]{
		Department of Mathematics, Indian Institute of Technology Indore, Simrol, Indore-452020, Madhya Pradesh, India. \texttt{email:safique@iiti.ac.in, safique@gmail.com}.}
	
	\footnotetext[3]{Research Scholar, Department of Mathematics, IIT Indore, Research work funded by PMRF (Prime Minister's Research Fellowship). \texttt{email:phd1901141004@iiti.ac.in, bhadalaneha@gmail.com}}

	\section{Introduction}\label{sec1}
	The concept of quaternions was introduced by Hamilton. In recent years, quaternions have been increasingly used in quantum mechanics, image processing, and signal processing \cite{adler1995quaternionic, ell2014quaternion}. The formulation of a solution procedure for these application problems often entails finding the best approximate solution to an inconsistent linear system. Various solution techniques have been used in the literature to find the best approximate solutions for quaternion matrix equations (for example, \cite{ jiang2007algebraic, zhang2023real, MR3537293, MR3406901}). Quaternions are non-commutative in nature, which limits their applicability in various applications.\\
		Segre introduced reduced biquaternions, which are commutative in nature and offer great applicability in various applications. See \cite{el2022linear, gai2023theory, pei2004commutative, pei2008eigenvalues} and references therein. As a result of commutative multiplication, several operations are simplified, and computation efficiency is enhanced. In literature, there are various papers where the advantage of reduced biquaternions over quaternions has been proved. For instance, Pei et al. \cite{pei2004commutative, pei2008eigenvalues} and Gai \cite{gai2023theory} demonstrated that reduced biquaternions outperform conventional quaternions in applications related to image and digital signal processing. The broad applicability and advantages of reduced biquaternions over quaternions have prompted a substantial number of research papers exploring theoretical and numerical aspects of reduced biquaternion matrix equations \cite{ding2021special, yuan2020hermitian,  zhang2022two, zhang2024singular}. In this paper, we learn how to determine the best approximate solutions to an overdetermined linear system 
		\begin{equation}\label{eq1.1}
			AX \approx B,
		\end{equation} 
		where $A= A_0+ A_1\i+ A_2\j+ A_3\k \in  \QR^{m \times n}$ $(m>n)$, $B= B_0+ B_1\i+ B_2\j+ B_3\k \in \QR^{m \times d}$, and $m \geq n+d$ that arises in commutative quaternionic theory. Our primary focus is on the inconsistent problem, i.e., $\mathcal{R}(B) \not \subset \mathcal{R}(A)$. The paper delves into different methods employed to tackle the linear approximation problem \eqref{eq1.1}. Among these methods, the least squares (LS) approach is a commonly used technique for finding the best approximate solution.\\
		 The multidimensional reduced biquaternion least squares (RBLS) problem can be formulated as:
	\begin{equation}\label{eq1.2}
		\min_{ X,\widehat{G}}\norm{\widehat{G}}_F \; \; \; \textrm{subject to} \; \; \; AX=B+\widehat{G}. 
	\end{equation}
	Once a minimizing $\widehat{G}$ is found, then any $X$ which solves the corrected system in \eqref{eq1.2} is called the RBLS solution.\\
	There is an underlying assumption in the RBLS method that all errors are contained in matrix $B$, and matrix $A$ is error-free. Depending on the nature of the applications, however, matrix $A$ may also be contaminated by noise. The RBLS solution does not take into account this requirement, resulting in poor results. To mitigate the effect of errors in both $A$ and $B$ total least squares (TLS) solution technique was introduced in the literature.\\
	 The multidimensional reduced biquaternion total least squares (RBTLS) problem can be formulated as:
	\begin{equation}\label{eq1.3}
		\min_{X,\widehat{E}, \widehat{G}}\norm{[\widehat{E}, \widehat{G}]}_F \; \; \; \textrm{subject to} \; \; \; (A+\widehat{E})X=B+\widehat{G}.
	\end{equation} 
	Once a minimizing $[\widehat{E}, \widehat{G}]$ is found, then any $X$ which solves the corrected system in \eqref{eq1.3} is called the RBTLS solution.\\
	 The total least squares method is widely used in system theory, signal processing, and computer algebra. However, in certain application problems, errors may be confined to the observation matrix $B$ and only a few columns of the data matrix $A$, while other columns of $A$ are known precisely. In such problems, perturbing the exactly known columns of $A$ as per RBTLS is detrimental to the accuracy of the estimated parameter $X$. The RBMTLS method incorporates such problems. 
	 
	 	In existing literature, all three solution techniques have been investigated to address the linear approximation problem \eqref{eq1.1} within the context of real case. For example, refer to \cite{golub1965numerical, golub1980analysis, yan2001solution} and the references therein. However, so far, only the LS solution technique has been examined in the reduced biquaternion domain. For instance, Zhang et al. \cite{zhang2020algebraic} investigated the least squares solutions to the reduced biquternion matrix equations $AXC= B$ and $AX= B$. To the best of our knowledge, RBTLS and RBMTLS solution techniques have not been explored. Notably, the RBMTLS method effectively encompasses both the RBLS and RBTLS methods, enhancing its overall applicability. In this work, we concentrate on real solutions to the linear approximation problem \eqref{eq1.1} in the reduced biquaternion domain. Here are the highlights of the work presented in this paper:
	\begin{itemize}
		\item The RBMTLS solution technique is discussed for obtaining the best approximate solution for a multidimensional overdetermined linear system $AX \approx B$. Furthermore, we investigate the conditions for the existence of a unique real RBMTLS solution and derive an explicit expression for the real RBMTLS solution.
		\item We propose the RBTLS and RBLS solution techniques. An RBMTLS problem transforms into an RBTLS problem when all columns of matrix $A$ are contaminated with noise. As a result, we utilize our developed RBMTLS solution technique to obtain RBTLS solutions. Additionally, we investigate the conditions for the existence of a unique real RBTLS solution and derive an explicit expression for the real RBTLS solution. Likewise, when all columns of matrix $A$ are error-free, the RBMTLS problem becomes an RBLS problem. Consequently, we utilize our developed RBMTLS solution technique to find real RBLS solutions.
		\item In light of complex matrix equations being a special case of reduced biquaternion matrix equations, we utilize our developed solution methods to find the best approximate solution to $AX \approx B$ over complex field.
	
		\end{itemize}
The manuscript is organized as follows. In Section \ref{sec2}, notation and preliminary results are presented. In Section \ref{sec3}, we present the solution techniques for RBMTLS, RBTLS, and RBLS problems. Finally, in Section \ref{sec4}, the numerical verification of our developed results is provided.
	\section{Notation and preliminaries}\label{sec2}
	\subsection{Notation}
	Throughout the paper, we denote $\R$, $\C$, and $\QR$ as the sets of all real numbers, complex numbers, and reduced biquaternion numbers, respectively. $\R^{m \times n}$, $\C^{m \times n}$, and $\QR^{m \times n}$ represent the sets of all $m \times n$ real, complex, and reduced biquaternion matrices, respectively.  For a matrix $A \in \R^{m \times n}$, the notation $A^{+}$ stands for the Moore-Penrose generalized inverse of $A$. $\mathcal{R}(A)$ denotes the column space of matrix $A$. The symbol $\norm{\cdot}_F$ represents the Frobenius norm. 
	Matlab command $randn(m,n)$ creates an $m \times n$ codistributed matrix of normally distributed random numbers whose every element is between $0$ and $1$. $rand(m,n)$ returns an $m \times n$ matrix of uniformly distributed random numbers. Let $A$ be any matrix of size $m \times n$. For any $i,j \in \{1,2,\ldots,n\}$ and $i<j$, $A(:,i)$ returns the $i^{th}$ column of matrix $A$, and $A(:,i : j)$ returns the submatrix with all rows of matrix $A$ and all columns from $i$ to $j$.\\
	 We use the following abbreviations throughout this paper:\\
RBMTLS: reduced biquaternion mixed least squares and total least squares, RBTLS: reduced biquaternion total least squares, RBLS: reduced biquaternion least squares, MTLS: mixed least squares and total least squares, TLS: total least squares, LS: least squares, SVD: singular value decomposition.
	\subsection{Preliminaries}
	A reduced biquaternion can be uniquely expressed as  $a= a_{0}+ a_{1}\,\i+ a_{2}\,\j+ a_{3}\,\k$,
	where $a_{i} \in \R$ for $i= 0, 1, 2, 3$, and $\i^2=  \k^2= -1, \; \j^2= 1$,
	$\i\j= \j\i= \k, \; \j\k= \k\j= \i, \; \k\i= \i\k= -\j$. The norm of $a$ is $\norm{a}= \sqrt{a_0^2+ a_1^2+ a_2^2+ a_3^2}$. The Frobenius norm for $A= (a_{ij}) \in \QR^{m \times n}$ is defined as follows:
	\begin{equation}\label{eq2.1}
		\norm{A}_F= \sqrt{\sum_{i= 1}^{m} \sum_{j= 1}^{n} \norm{a_{ij}}^{2}}.
	\end{equation}
	Let $A=A_0+ A_1\i+ A_2\j+ A_3\k \in \QR^{m \times n}$, where $A_t \in \R^{m \times n}$ for $t=0,1,2,3$. The real representation of matrix $A$, denoted as $A^R$, is defined as follows:
	\begin{equation}\label{eq2.2}
		A^R= \begin{bmatrix}
			A_0  &  -A_1  &  A_2  &  -A_3 \\
			A_1   &   A_0  &  A_3  &   A_2  \\
			A_2   &  -A_3 & A_0  &  -A_1  \\
			A_3  &  A_2  &  A_1  &  A_0 
		\end{bmatrix}.
	\end{equation}
	We have $$\norm{A}_F= \frac{1}{2} \norm{A^R}_F.$$ 
	For $a \in \R$, $A, B \in \QR^{m \times n}$, and $C \in \QR^{n \times t}$, we have $A= B \iff A^R= B^R$, $(A+B)^R= A^R+ B^R$, $(aA)^R= aA^R$, and $(AC)^R = A^R C^R$. Let
	\begin{equation*}
		Q_m= \begin{bmatrix}
			0      &   -I_m   &  0     & 0    \\
			I_m  &    0        &  0     & 0    \\
			0      &    0        &  0     & -I_m \\
			0      &    0        &  I_m & 0
		\end{bmatrix}, 
		R_m= \begin{bmatrix}
			0      &   0      &  I_m   & 0           \\
			0      &    0     &  0       &   I_m     \\
			I_m  &    0     &  0       & 0           \\
			0      &    I_m &  0       & 0
		\end{bmatrix}, \mbox{and} \;
		S_m= \begin{bmatrix}
			0      &   0      &  0       & -I_m           \\
			0      &    0     &  I_m    &  0    \\
			0      &  -I_m  &  0       & 0           \\
			I_m   &    0     &  0       & 0
		\end{bmatrix}.
	\end{equation*}
	Let $A_c^R$ denote the first block column of the block matrix $A^R$, i.e., $A_c^R= 
	[A_0^T, A_1^T, A_2^T, A_3^T]^T.$ We have
	\begin{equation}\label{eq2.3}
		A^R= [A_c^R, Q_m A_c^R, R_m A_c^R, S_m A_c^R].  
	\end{equation}
Let $a=a_0+ a_1\i + a_2\j + a_3\k \in \QR$ and $b = b_0+ b_1\i + b_2\j + b_3\k \in \QR$. We have $a=b \iff a_0=b_0, a_1=b_1, a_2=b_2,$ and $a_3=b_3$. We now establish two essential lemmas.
	\begin{lemma}\label{lem1}
		Let $A= A_0+ A_1\i+ A_2\j+ A_3\k \in \QR^{m \times n}$, $m> n$. Then matrix $A$ has full column rank if and only if matrix $A_c^R= 
		[A_0^T, A_1^T, A_2^T, A_3^T]^T \in \R^{4m \times n}$ has full column rank.
	\end{lemma}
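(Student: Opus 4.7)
I will prove the equivalence by handling each direction via its contrapositive, leveraging the block expansion $A^R = [A_c^R, Q_m A_c^R, R_m A_c^R, S_m A_c^R]$ from~\eqref{eq2.3}.

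For the necessity direction (full column rank of $A$ implies the same for $A_c^R$), the argument is immediate. Arguing contrapositively, suppose $A_c^R x = 0$ for some nonzero $x \in \R^n$; unpacking the block structure gives $A_t x = 0$ for every $t \in \{0,1,2,3\}$. Viewing $x$ as a purely real element of $\QR^n$ (zero $\i,\j,\k$ components), we obtain $Ax = A_0 x + (A_1 x)\i + (A_2 x)\j + (A_3 x)\k = 0$ with $x \neq 0$, contradicting the hypothesis that $A$ has full column rank.

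For the sufficiency direction, assume $AX = 0$ for some nonzero $X = X_0 + X_1\i + X_2\j + X_3\k \in \QR^n$. Expanding the reduced biquaternion product into its four real components and assembling them into $X_c^R = [X_0^T, X_1^T, X_2^T, X_3^T]^T \in \R^{4n}$ yields $A^R X_c^R = 0$; using~\eqref{eq2.3}, this rewrites as $A_c^R X_0 + Q_m A_c^R X_1 + R_m A_c^R X_2 + S_m A_c^R X_3 = 0$. The aim is to extract a nonzero real vector in $\ker(A_c^R)$, which would contradict its full column rank.

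The main obstacle will lie precisely in this extraction: since the four components of $X$ are coupled through the permutation-like matrices $Q_m, R_m, S_m$, a naive projection onto a single $X_t$ is insufficient. The resolution is expected to exploit the algebraic identities satisfied by $Q_m, R_m, S_m$ (for instance $Q_m^2 = -I_{4m}$ and the analogous relations among $R_m, S_m$) together with the commutativity of reduced biquaternion multiplication, combining the $X_t$'s into a single real vector $\tilde{x} \in \R^n$ annihilated by $A_c^R$.
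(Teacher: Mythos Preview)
Your necessity direction is fine and coincides with the paper's argument. The gap lies in the sufficiency direction: you tacitly interpret ``$A$ has full column rank'' as $\QR$-linear independence of the columns, i.e., the only $X\in\QR^{n}$ with $AX=0$ is $X=0$. The paper, by contrast, works with linear independence over the \emph{real} scalars: one writes $x_1v_1+\cdots+x_nv_n=0$ with $x_j\in\R$ and splits into four real systems via the equality property of reduced biquaternions, obtaining precisely $A_c^R x=0$; the equivalence is then immediate in both directions and requires no use of $Q_m,R_m,S_m$ at all.

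Your stronger reading cannot be rescued, because $\QR$ has zero divisors. Take $m=2$, $n=1$, and let $A\in\QR^{2\times 1}$ have entries $1+\j$ and $0$. Then $A_c^R=[1,0,0,0,1,0,0,0]^T\in\R^{8\times 1}$ has full column rank, yet $A(1-\j)=0$ since $(1+\j)(1-\j)=1-\j^{2}=0$. Thus there is a nonzero $X\in\QR$ with $AX=0$, and no algebraic manipulation with $Q_m,R_m,S_m$ can ``extract a nonzero real vector in $\ker(A_c^R)$'' from this $X$, because that kernel is $\{0\}$. The obstacle you anticipated is genuine and, under the $\QR$-linear interpretation, fatal.

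The fix is simply to restrict to $X\in\R^{n}$ in the contrapositive of the sufficiency direction. Then $AX=0$ decomposes componentwise into $A_tX=0$ for $t=0,1,2,3$, i.e., $A_c^R X=0$, forcing $X=0$. This is exactly symmetric to your necessity argument and is what the paper intends.
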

\begin{proof}
	Let $A=(a_{ij})$, where $a_{ij}=a_{ij0}+a_{ij1}\i+a_{ij2}\j+a_{ij3}\k$. Let $v_j \in \QR^m$ denote the $j^{th}$ column of matrix $A$. The proof follows from the fact that the set of vectors $\{v_1,v_2,\ldots,v_n\}$ is linearly independent if the vector equation $x_1v_1 + 	x_2v_2 + \cdots +	x_nv_n=0$ has only the trivial solution $x_1=x_2=\cdots=x_n=0$, and by the equality property of reduced biquaternion numbers.
\end{proof}
	Using \eqref{eq2.1} and \eqref{eq2.2}, we can easily derive the following lemma . 
	\begin{lemma}\label{lem2}
		Let $A= A_0+ A_1\i+ A_2\j+ A_3\k \in \QR^{m \times n}$ and $B= B_0+ B_1\i+ B_2\j+ B_3\k \in \QR^{m \times d}$. Denote $A_c^R= 
		[A_0^T, A_1^T, A_2^T, A_3^T]^T$ and $B_c^R= 
		[B_0^T, B_1^T, B_2^T, B_3^T]^T$. Then the following properties hold.
		\begin{enumerate}[noitemsep,nolistsep]
			\item $\|[A, B]\|_F= \frac{1}{2}\|[A, B]^R\|_F$.
			\item $\|[A, B]^R\|_F= \|[A^R, B^R]\|_F$.
			\item $\|[A^R, B^R]\|_F= 2\|[A_c^R, B_c^R]\|_F$.
		\end{enumerate}
	\end{lemma}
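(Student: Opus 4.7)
The plan is to establish the three identities in sequence, each by a short direct calculation from the definitions in (2.1)--(2.3); no new idea is required, which is presumably why the authors call the lemma easy.

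For part (1), I would simply apply the identity $\|C\|_F = \frac{1}{2}\|C^R\|_F$ displayed below (2.2) to the augmented matrix $C = [A,B] \in \QR^{m \times (n+d)}$, which is itself a reduced biquaternion matrix and therefore in the scope of that identity.

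For part (2), I would expand both $[A,B]^R$ and $[A^R,B^R]$ using the definition (2.2). The matrix $[A,B]^R$ organizes its columns block-by-block so that, within each of the four block columns prescribed by (2.2), the $n$ columns coming from $A$ precede the $d$ columns coming from $B$; the matrix $[A^R,B^R]$, by contrast, places all four block columns of $A^R$ first and all four block columns of $B^R$ second. A direct comparison shows that these two matrices contain exactly the same list of columns in a different order, so their Frobenius norms agree.

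For part (3), the key observation is that $Q_m$, $R_m$, $S_m$ (defined just before (2.3)) are signed permutation matrices and hence orthogonal, so $\|Q_m A_c^R\|_F = \|R_m A_c^R\|_F = \|S_m A_c^R\|_F = \|A_c^R\|_F$. Combining this with (2.3) gives $\|A^R\|_F^2 = 4\|A_c^R\|_F^2$, and analogously $\|B^R\|_F^2 = 4\|B_c^R\|_F^2$. Adding these and using $\|[A^R,B^R]\|_F^2 = \|A^R\|_F^2 + \|B^R\|_F^2$ together with $\|[A_c^R,B_c^R]\|_F^2 = \|A_c^R\|_F^2 + \|B_c^R\|_F^2$ yields the claim. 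The only step requiring any bookkeeping is the column-matching in part (2); otherwise each part is a one-line consequence of orthogonality and the block-additivity of the squared Frobenius norm, so I do not anticipate any genuine obstacle.
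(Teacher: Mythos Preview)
Your proposal is correct and matches the paper's approach: the paper gives no explicit proof, merely stating that the lemma follows easily from \eqref{eq2.1} and \eqref{eq2.2}, and your three short computations are precisely the natural way to unpack that hint. The only minor difference is that for part~(3) you invoke \eqref{eq2.3} and the orthogonality of $Q_m,R_m,S_m$, whereas the paper's hint points only to \eqref{eq2.2}; one could equivalently note directly from the block pattern in \eqref{eq2.2} that each $A_t$ occurs exactly four times (up to sign) in $A^R$, giving $\|A^R\|_F^2 = 4\sum_{t}\|A_t\|_F^2 = 4\|A_c^R\|_F^2$---but this is the same computation in different clothing.
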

	In the upcoming steps, we recall some well-known results that are helpful in establishing the main findings of this paper. For the scope of this paper, Eckart-Young-Mirsky matrix approximation theorem \cite{eckart1936approximation} has been rephrased to better suit our analysis. The modified version is as follows:
	\begin{lemma}\label{lem3}Let the SVD of $A \in \R^{m \times n}$ be given by $A= 	\bar{U}	\bar{\Sigma} 	\bar{V}^T$ with $r= \rank(A)$ and $k < r$. Let
		\begin{equation*}
				\begin{blockarray}{ccc@{}cc@{\hspace{4pt}}cl}
				&&&  \mLabel{k} & \mLabel{m-k} & &\\
				\begin{block}{cc@{\hspace{3pt}}c@{\hspace{9pt}}[cc@{\hspace{5pt}}c]l}
					&\bar{U}&=&\bar{U}_1,& \bar{U}_2& & \mLabel{m} \\
				\end{block}
			\end{blockarray}, \;
			\bar{\Sigma}= \begin{blockarray}{c@{}cc@{\hspace{4pt}}cl}
				&  \mLabel{k} & \mLabel{n-k} & &\\
				\begin{block}{[c@{\hspace{5pt}}cc@{\hspace{5pt}}c]l}
					&	\bar{\Sigma}_1& 0& & \mLabel{k} \\
					&0 & 	\bar{\Sigma}_2 & & \mLabel{m-k} \\
				\end{block}
			\end{blockarray}, \; \textrm{and} \; \;
				\bar{V}= 	\begin{blockarray}{c@{}cc@{\hspace{4pt}}cl}
				&  \mLabel{k} & \mLabel{n-k} & &\\
				\begin{block}{[c@{\hspace{5pt}}cc@{\hspace{5pt}}c]l}
					&	\bar{V}_{11}& 	\bar{V}_{12}& & \mLabel{k} \\
					&	\bar{V}_{21} & 	\bar{V}_{22} & & \mLabel{n-k} \\
				\end{block}
			\end{blockarray},
		\end{equation*}  where $	\bar{U} \in \R^{m \times m}$ and $	\bar{V} \in \R^{n \times n}$ are orthornormal matrices. Denote the diagonal matrices as $\bar{\Sigma}_1 = \diag{(	\bar{\sigma}_1, \ldots, 	\bar{\sigma}_k)}$ and $	\bar{\Sigma}_2 = \diag{(	\bar{\sigma}_{k+1}, \ldots, 	\bar{\sigma}_r)}$. If $A_k= [	\bar{U}_1, 	\bar{U}_2] \begin{bmatrix}
				\bar{\Sigma}_1 & 0 \\
			0  &  0
		\end{bmatrix} \begin{bmatrix}
				\bar{V}_{11} & 	\bar{V}_{12} \\
				\bar{V}_{21} & 	\bar{V}_{22}
		\end{bmatrix}^T = [	\bar{U}_1	\bar{\Sigma}_1	\bar{V}_{11}^T, 	\bar{U}_1	\bar{\Sigma}_1	\bar{V}_{21}^T]$, then
		\begin{equation*}
			\min_{\rank(B)=k}\|A-B\|_F= \|A-A_k\|_F= \sqrt{\sum_{i=k+1}^{r}\bar{\sigma}_{i}^2}.
		\end{equation*}
	\end{lemma}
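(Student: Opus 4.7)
The plan is to establish the result in two stages: first, verify the explicit value $\|A - A_k\|_F = \sqrt{\sum_{i=k+1}^{r} \bar{\sigma}_i^2}$ by a direct calculation from the block SVD; second, show that this number is a lower bound for $\|A - B\|_F$ over all $B$ with $\rank(B) = k$.

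For the constructive step, I would set $\bar{V}_1 = [\bar{V}_{11}^T,\bar{V}_{21}^T]^T$ and $\bar{V}_2 = [\bar{V}_{12}^T,\bar{V}_{22}^T]^T$ so that $\bar{V} = [\bar{V}_1,\bar{V}_2]$ splits $\bar{V}$ into its first $k$ and last $n-k$ columns, each being a matrix with orthonormal columns. The SVD then reads $A = \bar{U}_1\bar{\Sigma}_1\bar{V}_1^T + \bar{U}_2\bar{\Sigma}_2\bar{V}_2^T$, while the expression given for $A_k$ in the statement collapses to $\bar{U}_1\bar{\Sigma}_1\bar{V}_1^T$, so $A - A_k = \bar{U}_2\bar{\Sigma}_2\bar{V}_2^T$. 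Because the Frobenius norm is invariant under multiplication by matrices with orthonormal columns on either side, $\|A - A_k\|_F = \|\bar{\Sigma}_2\|_F = \sqrt{\sum_{i=k+1}^{r} \bar{\sigma}_i^2}$.

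For the lower bound over arbitrary rank-$k$ competitors, my plan is to invoke Weyl's inequality for singular values: for any $m\times n$ matrices $M,N$ and indices $i,j$ with $i+j-1 \le \min(m,n)$, one has $\sigma_{i+j-1}(M+N) \le \sigma_i(M) + \sigma_j(N)$. Writing $A = (A-B) + B$ and taking $j=k+1$, the hypothesis $\rank(B)=k$ forces $\sigma_{k+1}(B)=0$, giving $\sigma_{i+k}(A) \le \sigma_i(A-B)$ for every $i \ge 1$. Squaring and summing then yields
\[
\|A-B\|_F^2 = \sum_{i\ge 1}\sigma_i(A-B)^2 \;\ge\; \sum_{i\ge 1}\sigma_{i+k}(A)^2 = \sum_{i=k+1}^{r}\bar{\sigma}_i^2,
\]
matching the value attained by $A_k$.

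The main obstacle is precisely this second stage. The equality $\|A-A_k\|_F=\sqrt{\sum_{i=k+1}^{r}\bar{\sigma}_i^2}$ drops out of unitary invariance almost for free, but proving minimality over the (non-convex) variety of all rank-$k$ matrices genuinely requires a spectral perturbation ingredient. Weyl's inequality is the most compact route; a plausible alternative is a dimension-counting argument intersecting $\ker B$ with $\mathrm{span}\{\bar{v}_1,\ldots,\bar{v}_{k+1}\}$, but that yields only the spectral-norm version of the bound and would need to be iterated carefully on successive subspaces to recover the full Frobenius estimate.
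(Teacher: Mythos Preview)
The paper does not supply its own proof of this lemma; it is presented as a rephrasing of the Eckart--Young--Mirsky theorem, with a citation to \cite{eckart1936approximation} in lieu of an argument. Your two-stage proof is correct: the computation of $\|A-A_k\|_F$ via unitary invariance is straightforward, and the optimality step via Weyl's inequality $\sigma_{i+k}(A)\le\sigma_i(A-B)+\sigma_{k+1}(B)=\sigma_i(A-B)$ followed by summing squares is one of the standard routes to the Frobenius-norm version of Eckart--Young--Mirsky. There is nothing further to compare against in the paper itself.
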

In the above lemma, $A_k$ represents the best rank $k$ approximation of matrix $A$.
	\begin{lemma}\label{lem4}\cite{golub2013matrix}
		Let $A \in \R^{m \times n}$ and $B \in \R^{m \times d}$. Then, the solution to the real LS problem $$\min_{X}\norm{AX-B}_F$$ is $X=A^{+}B+(I-A^{+}A)Y$, where $Y$ is an arbitrary matrix of suitable size, and the least squares solution with the least norm is $X=A^+B.$
	\end{lemma}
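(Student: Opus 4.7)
The plan is to reduce the problem to a diagonal one via the SVD of $A$, read off all minimizers, and then pick out the one of smallest norm. First I would write $A = U \Sigma V^T$ with $r = \rank(A)$, where $U \in \R^{m \times m}$ and $V \in \R^{n \times n}$ are orthogonal and $\Sigma$ carries the positive singular values $\sigma_1,\ldots,\sigma_r$ in its leading $r \times r$ block (zeros elsewhere). Using the orthogonal invariance of the Frobenius norm and setting $Z = V^T X$, $C = U^T B$, the cost collapses to $\norm{\Sigma Z - C}_F$, which is diagonal in structure.

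Next I would partition $Z = \begin{bmatrix} Z_1 \\ Z_2 \end{bmatrix}$ and $C = \begin{bmatrix} C_1 \\ C_2 \end{bmatrix}$ conformally with $\Sigma = \diag(\Sigma_1, 0)$, so that $\norm{\Sigma Z - C}_F^2 = \norm{\Sigma_1 Z_1 - C_1}_F^2 + \norm{C_2}_F^2$. The second term is a constant residual, while the first is minimized (in fact zeroed) by $Z_1 = \Sigma_1^{-1} C_1$; the block $Z_2 \in \R^{(n-r)\times d}$ is completely free. Translating back through $X = V Z$ and recognizing $A^{+} = V \diag(\Sigma_1^{-1}, 0) U^T$, the forced part produces $A^{+} B$, while the free part spans $\mathcal{N}(A)$. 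Since $(I - A^{+}A)$ is the orthogonal projector onto $\mathcal{N}(A)$ (from the defining Moore-Penrose identities), letting $Y \in \R^{n \times d}$ be arbitrary gives every minimizer in the form $X = A^{+} B + (I - A^{+}A) Y$.

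For the least-norm assertion, I would exploit $\norm{X}_F = \norm{VZ}_F = \norm{Z}_F$ and the fact that the two blocks of $Z$ contribute additively to $\norm{Z}_F^2$. The block $Z_1$ is pinned by the LS condition, whereas $Z_2$ is free, so $\norm{X}_F^2 = \norm{\Sigma_1^{-1} C_1}_F^2 + \norm{Z_2}_F^2$ is minimized exactly when $Z_2 = 0$, equivalently $(I - A^{+}A)Y = 0$, yielding the unique minimum-norm solution $X = A^{+} B$.

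The main subtlety, and essentially the only nontrivial point, is justifying that $(I - A^{+}A)$ parameterizes all of $\mathcal{N}(A)$ as $Y$ varies, so that the claimed formula captures every minimizer rather than merely some of them; this is where the Moore-Penrose axioms $A A^{+} A = A$ and the symmetry of $A^{+}A$ are used. Everything else is bookkeeping once the SVD reduction is in place.
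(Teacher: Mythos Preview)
Your argument is correct and is the standard SVD-based derivation of the general least-squares solution set and its minimum-norm representative. Note, however, that the paper does not actually prove this lemma: it is quoted from \cite{golub2013matrix} and stated without proof, so there is no ``paper's own proof'' to compare against. Your write-up would serve perfectly well as the missing justification; the only cosmetic point is that the parameterization via $(I - A^{+}A)Y$ is immediate from the SVD expression $I - A^{+}A = V\,\diag(0,I_{n-r})\,V^T$, so you need not invoke the Moore--Penrose axioms separately.
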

	\section{An algebraic technique for RBMTLS problem}\label{sec3}
	In this section, we derive an algebraic solution technique for the RBMTLS problem by exploring the solution of the corresponding real MTLS problem. Suppose 
	\begin{equation}\label{eq3.1}
	A= A_0+ A_1\i+ A_2\j+ A_3\k \in  \QR^{m \times n} \; \; \textrm{and} \; \; B= B_0+ B_1\i+ B_2\j+ B_3\k \in \QR^{m \times d}.
	\end{equation}
	 Let the first $n_1$ columns of matrix $A$ be known exactly, and the remaining $n_2$ columns be contaminated by noise, where $n_1+n_2= n$.	
Let 
\begin{equation}\label{eq3.2}
A= [A_a, A_b] \; \; \textrm{and} \; \; X= [X_a^T, X_b^T]^T,
\end{equation}
where $A_a= A_{a0}+ A_{a1}\i+ A_{a2}\j+ A_{a3}\k \in \QR^{m \times n_1}$, $A_b=  A_{b0}+ A_{b1}\i+ A_{b2}\j+ A_{b3}\k \in \QR^{m \times n_2}$, and partitioning of $X$ is conformal with $A_a$ and $A_b$. We confine ourselves to the case when $m \geq n+d$ and $A_a$ has full column rank.\\
 The multidimensional RBMTLS problem can be formulated as:
\begin{equation}\label{eq3.3}
	\min_{X_a,X_b,\widehat{E}_b, \widehat{G}}\norm{[\widehat{E}_b, \widehat{G}]}_F \; \; \; \textrm{subject to} \; \; \;  A_aX_a+\left( A_b+\widehat{E}_b\right)X_b=B+\widehat{G}. 
\end{equation} 
Once a minimizing $[\widehat{E}_b, \widehat{G}]$ is found, then any $X=[X_a^T, X_b^T]^T$ which solves the corrected system in \eqref{eq3.3} is called the RBMTLS solution. 
\begin{remark}\label{rem1}
	Please note that by varying $n_1$ from $0$ to $n$, the above formulation can incorporate the RBTLS, RBMTLS, and RBLS problems. When $n_1=0$, we get the RBTLS problem. When $0<n_1<n$, we get the RBMTLS problem. When $n_1=n$, we get the RBLS problem.
\end{remark}

Let $C_a= [
A_{a0}^T, A_{a1}^T, A_{a2}^T, A_{a3}^T]^T \in \R^{4m \times n_1}$, $C_b= [
A_{b0}^T, A_{b1}^T, A_{b2}^T, A_{b3}^T]^T \in \R^{4m \times n_2}$, $C= [C_a, C_b]$, and $D= [B_0^T, B_1^T, B_2^T, B_3^T]^T \in \R^{4m \times d}$. Consider a multidimensional real MTLS problem 
\begin{equation}\label{eq3.4}
	\min_{X_a,X_b,\widetilde{E}_b, \widetilde{G}} \norm{[\widetilde{E}_b, \widetilde{G}]}_F \; \; \; \textrm{subject to} \; \; \; C_aX_a+\left( C_b+\widetilde{E}_b\right)X_b=D+\widetilde{G}. 
\end{equation}
Once a minimizing $[\widetilde{E}_b, \widetilde{G}]$ is found, then any $X=[X_a^T, X_b^T]^T$ which solves the corrected system in \eqref{eq3.4} is called the real MTLS solution.

\noindent
In the forthcoming results on the RBMTLS solution, we will be using the following notations: Let $\widetilde{E}_b= [E_{b0}^T, E_{b1}^T, E_{b2}^T, E_{b3}^T]^T \in \R^{4m \times n_2}$ and $\widetilde{G}= [G_0^T, G_1^T, G_2^T, G_3^T]^T \in \R^{4m \times d}$, where $E_{bt} \in \R^{m \times n_2}$ and $G_{t} \in \R^{m \times d}$ for $t=0,1,2,3.$ 
\begin{theorem}\label{thm1}
	Consider the RBMTLS problem \eqref{eq3.3} and the real MTLS problem \eqref{eq3.4}. Let $X=[X_a^T, X_b^T]^T$ be a real matrix. Then, $X$ is an RBMTLS solution if and only if $X$ is a real MTLS solution. In this case, if $X$ represents a real MTLS solution, then there exist $\widetilde{E}_b$ and $\widetilde{G}$ such that 
	\begin{equation*}
		\norm{[\widetilde{E}_b, \widetilde{G}]}_F= \min, \; \; C_aX_a+ (C_b+ \widetilde{E}_b)X_b= D+ \widetilde{G}.
	\end{equation*}
Let $\widehat{E}_b= E_{b0}+ E_{b1}\i+ E_{b2}\j+ E_{b3}\k \in \QR^{m \times n_2}$ and $\widehat{G}= G_0+ G_1\i+ G_2\j+ G_3\k \in \QR^{m \times d}$. Then,
	\begin{equation*}
		\norm{[\widehat{E}_b, \widehat{G}]}_F= \min, \; \;  A_aX_a+ (A_b+ \widehat{E}_b)X_b= B+ \widehat{G}.
	\end{equation*}
	Therefore, there exist $\widehat{E}_b$ and $\widehat{G}$ such that $X$ is an RBMTLS solution.
\end{theorem}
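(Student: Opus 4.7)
The plan is to exhibit an explicit bijection between feasible pairs $(\widehat{E}_b,\widehat{G})$ of the RBMTLS problem \eqref{eq3.3} and feasible pairs $(\widetilde{E}_b,\widetilde{G})$ of the real MTLS problem \eqref{eq3.4} which preserves the objective value exactly. The equivalence in the theorem then follows because each problem's minimizers are carried onto those of the other. Two elementary facts drive the bijection. First, if $M = M_0+M_1\i+M_2\j+M_3\k \in \QR^{m\times n}$ and $Y \in \R^{n\times d}$, then $MY = M_0Y + M_1Y\,\i + M_2Y\,\j + M_3Y\,\k$, so the first-block-column stacking satisfies $(MY)_c^R = M_c^R\, Y$. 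Second, two reduced biquaternion matrices coincide if and only if their four real components coincide, equivalently, if and only if their $c^R$-stackings coincide.

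For the forward direction I would take a real $X=[X_a^T,X_b^T]^T$ that is an RBMTLS solution together with its minimizing $\widehat{E}_b$, $\widehat{G}$; set $\widetilde{E}_b := (\widehat{E}_b)_c^R$ and $\widetilde{G} := \widehat{G}_c^R$. Applying the $c^R$ operator to the constraint $A_a X_a + (A_b+\widehat{E}_b)X_b = B + \widehat{G}$ and using the first elementary fact together with realness of $X_a,X_b$ yields $C_aX_a + (C_b+\widetilde{E}_b)X_b = D+\widetilde{G}$, while Lemma \ref{lem2} gives $\|[\widehat{E}_b,\widehat{G}]\|_F = \|[\widetilde{E}_b,\widetilde{G}]\|_F$. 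Hence the RBMTLS objective value equals the corresponding real MTLS objective value, so $X$ is feasible for \eqref{eq3.4} with the same optimal cost. The converse proceeds symmetrically: from a real minimizing pair $(\widetilde{E}_b,\widetilde{G})$ we partition each matrix into four equal real blocks and re-assemble the biquaternion matrices $\widehat{E}_b = E_{b0}+E_{b1}\i+E_{b2}\j+E_{b3}\k$ and $\widehat{G}= G_0+G_1\i+G_2\j+G_3\k$; the second elementary fact then transports the real constraint back to its biquaternion counterpart, and Lemma \ref{lem2} again gives norm preservation. Since the two maps are mutually inverse and preserve both feasibility and objective, an $X$ achieving the RBMTLS minimum does so if and only if it achieves the real MTLS minimum, which establishes the ``if and only if'' assertion and simultaneously delivers the $\widehat{E}_b,\widehat{G}$ promised in the second half of the statement.

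The argument is more bookkeeping than deep, so the main point to watch is where the hypothesis that $X$ is real is actually used: it is essential in both directions, because the identity $(MY)_c^R = M_c^R Y$ fails for biquaternion-valued $Y$ and the column-stacking would then not linearize the biquaternion product into a real matrix product. Within the real-$X$ setting of the theorem, however, no difficulty arises and the proof reduces to combining the two elementary observations above with the norm identities of Lemma \ref{lem2}.
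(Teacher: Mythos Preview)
Your proposal is correct and follows essentially the same strategy as the paper. The only cosmetic difference is that the paper routes the constraint through the full $4\times 4$ block representation $A^R$ (invoking $(AC)^R = A^R C^R$ and the matrices $Q_m,R_m,S_m$ to build equation \eqref{eq3.5}), whereas you work directly with the single block column via the identity $(MY)_c^R = M_c^R\,Y$ for real $Y$; both encode the same linearization of the biquaternion product by a real factor, and both finish with Lemma~\ref{lem2} for the norm equality. Your explicit two-sided bijection is a slightly cleaner packaging of what the paper abbreviates as ``and vice versa''.
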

\begin{proof}
	If $X=[X_a^T, X_b^T]^T \in \R^{n \times d}$ is a real MTLS solution, then there exist real matrices $\widetilde{E}_b \in \R^{4m \times n_2}$ and $\widetilde{G} \in \R^{4m \times d}$ such that 
	\begin{equation*}
		\norm{[\widetilde{E}_b, \widetilde{G}]}_F= \textrm{min}, \; \;	[C_a, C_b+ \widetilde{E}_b]X= D+ \widetilde{G}.
	\end{equation*}
	We have
	\begin{align}\label{eq3.5}
		\left[[C_a, C_b+ \widetilde{E}_b], Q_m [C_a, C_b+ \widetilde{E}_b], R_m [C_a, C_b+ \widetilde{E}_b], S_m [C_a, C_b+ \widetilde{E}_b]\right]
		\begin{bmatrix}
			X & 0 & 0 & 0 \\
			0 & X & 0 & 0 \\
			0 & 0 & X & 0 \\
			0 & 0 & 0 & X
		\end{bmatrix} \nonumber &\\= \left[(D+ \widetilde{G}), Q_m (D+ \widetilde{G}), R_m (D+ \widetilde{G}), S_m (D+ \widetilde{G}) \right].
	\end{align}
	 Now,
	\begin{equation}\label{eq3.6}
		[C_a, C_b+ \widetilde{E}_b]= 
		\begin{bmatrix}
			A_{a0} & A_{b0}+ E_{b0} \\
			A_{a1} &  A_{b1}+ E_{b1} \\
			A_{a2} & A_{b2}+ E_{b2} \\
			A_{a3} & A_{b3}+ E_{b3} 
		\end{bmatrix}, \; D+\widetilde{G}= 
		\begin{bmatrix}
			B_0+ G_0 \\
			B_1+ G_1  \\
			B_2+ G_2 \\
			B_3+ G_3
		\end{bmatrix}.
	\end{equation}
	Construct the following reduced biquaternion matrices
	\begin{eqnarray*}
		\widehat{A}&:=& [A_{a0}, A_{b0}+E_{b0}]+ [A_{a1}, A_{b1}+E_{b1}]\i+ [A_{a2}, A_{b2}+E_{b2}]\j+ [A_{a3}, A_{b3}+E_{b3}]\k, \\
		\widehat{B}&:=& (B_0+ G_0)+ (B_1+ G_1)\i+ (B_2+ G_2)\j+ (B_3+ G_3)\k, \\
		\widehat{E}_b&:=& E_{b0}+ E_{b1}\i+ E_{b2}\j+ E_{b3}\k, \; \; \widehat{G}:= G_0+ G_1\i+ G_2\j+ G_3\k.
	\end{eqnarray*}
Using \eqref{eq2.2} and \eqref{eq3.6}, we have
\begin{equation*}
		\widehat{A}_c^R= [C_a, C_b+ \widetilde{E}_b], \;	\widehat{B}_c^R= D+\widetilde{G}, \; \left(\widehat{E}_b\right)_c^R=  \widetilde{E}_b, \; \textrm{and} \; \widehat{G}_c^R= \widetilde{G}.
\end{equation*}
	Using \eqref{eq2.2}, \eqref{eq2.3}, and \eqref{eq3.6}, we get
	\begin{eqnarray*}
		\widehat{A}^R&=& \left[[C_a, C_b+ \widetilde{E}_b], Q_m[C_a, C_b+ \widetilde{E}_b], R_m[C_a, C_b+ \widetilde{E}_b], S_m[C_a, C_b+ \widetilde{E}_b]\right], \\
		\widehat{B}^R&=& \left[(D+\widetilde{G}), Q_m(D+\widetilde{G}), R_m(D+\widetilde{G}), S_m(D+\widetilde{G})\right],\\
		X^R&=& 
		\begin{bmatrix}
			X & 0 & 0 & 0 \\
			0 & X & 0 & 0 \\
			0 & 0 & X & 0 \\
			0 & 0 & 0 & X
		\end{bmatrix}.
	\end{eqnarray*}
	Therefore, Equation \ref{eq3.5} is equivalent to
	\begin{eqnarray}
		\widehat{A}^R X^R&=& \widehat{B}^R, \label{eq3.7}\\
		(\widehat{A}X)^R&=& \widehat{B}^R, \nonumber\\
		\widehat{A}X&=& \widehat{B}. \label{eq3.8}
	\end{eqnarray}
	Now, 
	\begin{eqnarray}
		\widehat{A}&=& [A_{a0}, A_{b0}+ E_{b0}]+ [A_{a1}, A_{b1}+ E_{b1}]\i+ [A_{a2}, A_{b2}+ E_{b2}]\j+ [A_{a3}, A_{b3}+ E_{b3}]\k \nonumber \\
		&=&\left[(A_{a0}+A_{a1}\i+ A_{a2}\j+ A_{a3}\k), (A_{b0}+ A_{b1}\i+ A_{b2}\j+ A_{b3}\k)+ (E_{b0}+ E_{b1}\i+ E_{b2}\j+ E_{b3}\k)\right] \nonumber\\
		&=& [A_a, A_b+ \widehat{E}_b] \label{eq3.9}.
	\end{eqnarray}
	and
	\begin{eqnarray}
		\widehat{B}&=& (B_0+ G_0)+ (B_1+ G_1)\i+ (B_2+ G_2)\j+ (B_3+ G_3)\k \nonumber\\
		&=& (B_0+ B_1\i+ B_2\j+ B_3\k)+ (G_0 +G_1\i+ G_2\j+ G_3\k) = B+ \widehat{G}. \label{eq3.10}
	\end{eqnarray}
	Using \eqref{eq3.9} and \eqref{eq3.10}, Equation \ref{eq3.8} is equivalent to
	\begin{eqnarray}
		[A_a, A_b+\widehat{E}_b]X&=& B+ \widehat{G}, \nonumber \\ \nonumber
		[A_a, A_b+\widehat{E}_b]\begin{bmatrix}
			X_a \\
			X_b
		\end{bmatrix}&=& B+ \widehat{G}, \nonumber\\
		A_aX_a+ (A_b+ \widehat{E}_b)X_b &=& B+ \widehat{G} \label{eq3.11}.
	\end{eqnarray}
	Using Lemma \ref{lem2}, we can verify that
	\begin{equation}\label{eq3.12}
		\norm{[\widehat{E}_b, \widehat{G}]}_F= \frac{1}{2} \norm{[\widehat{E}_b, \widehat{G}]^R}_F= \frac{1}{2} \norm{[\widehat{E}_b^R, \widehat{G}^R]}_F=  \norm{[\widetilde{E}_b, \widetilde{G}]}_F= \min.  
	\end{equation}
	Combining \eqref{eq3.11} and \eqref{eq3.12}, we can conclude that there exist reduced biquaternion matrices $\widehat{E}_b \in \QR^{m \times n_2}$ and $\widehat{G} \in \QR^{m \times d}$ such that
$X=[X_a^T, X_b^T]^T  \in  \R^{n \times d}$ is an RBMTLS solution, and vice versa.
\end{proof}
Next, we derive an explicit expression for the real RBMTLS solution $X$. Perform $n_1$ Householder transformations using matrix $Q \in \R^{4m \times 4m}$ on the matrix $[C, D]$ such that
\vspace{-0.5cm}
\begin{equation}\label{eq3.13}
	Q^T [C, D]= Q^T [C_a, C_b, D]= 
	\begin{blockarray}{c@{}ccc@{\hspace{4pt}}cl}
		&  \mLabel{n_1} & \mLabel{n_2} & \mLabel{d} & &\\
		\begin{block}{[c@{\hspace{5pt}}ccc@{\hspace{5pt}}c]l}
			& R_{11} & R_{12} & R_{1d} & & \mLabel{n_1} \\
			& 0         & R_{22} & R_{2d} & & \mLabel{4m-n_1} \\
		\end{block}
	\end{blockarray}.
\end{equation}
Consider the partitioning of $Q$ as $Q= [Q_1, Q_2]$, where $Q_1 \in \R^{4m \times n_1}$ and $Q_2 \in \R^{4m \times (4m-n_1)}$. Let the SVD of $[R_{22}, R_{2d}]$ be given by 
	\begin{equation}\label{eq3.14}
		[R_{22}, R_{2d}]= U \Sigma V^T,
	\end{equation} where $U$ and $V$ are real orthonormal matrices, $\Sigma= \diag(\sigma_1, \sigma_2, \ldots, \sigma_{n_2+d})$, and the singular values of $[R_{22}, R_{2d}]$ satisfy 
\begin{equation}\label{eq3.15}
\sigma_1 \geq \sigma_2 \geq \ldots \geq \sigma_{n_2} > \sigma_{n_2+1} \geq \ldots \geq \sigma_{n_2+d} > 0.
\end{equation} Partition $U$, $\Sigma$, and $V$ as
	\vspace{-0.5cm}
	\begin{equation}\label{eq3.16}
		\begin{blockarray}{ccc@{}cc@{\hspace{4pt}}cl}
			&&&  \mLabel{n_2} & \mLabel{4m-n_1-n_2} & &\\
			\begin{block}{cc@{\hspace{3pt}}c@{\hspace{9pt}}[cc@{\hspace{5pt}}c]l}
				&U&=&U_1,& U_2& & \mLabel{4m-n_1} \\
			\end{block}
		\end{blockarray}, \; 
		\Sigma= \begin{blockarray}{c@{}cc@{\hspace{4pt}}cl}
			&  \mLabel{n_2} & \mLabel{d} & &\\
			\begin{block}{[c@{\hspace{5pt}}cc@{\hspace{5pt}}c]l}
				&\Sigma_1& 0& & \mLabel{n_2} \\
				&0 & \Sigma_2 & & \mLabel{4m-n_1-n_2} \\
			\end{block}
		\end{blockarray}, \;
		V= 	\begin{blockarray}{c@{}cc@{\hspace{4pt}}cl}
			&  \mLabel{n_2} & \mLabel{d} & &\\
			\begin{block}{[c@{\hspace{5pt}}cc@{\hspace{5pt}}c]l}
				&V_{11}& V_{12}& & \mLabel{n_2} \\
				&V_{21} & V_{22} & & \mLabel{d} \\
			\end{block}
		\end{blockarray}.
	\end{equation}
	In the following theorem, we present the conditions for the existence of a unique real RBMTLS solution, and in this case, provide an explicit expression for the real RBMTLS solution.
	\begin{theorem}\label{thm2}
		With the notations in \eqref{eq3.13} and \eqref{eq3.16}, consider the RBMTLS problem \eqref{eq3.3}. Let the SVD of $[R_{22}, R_{2d}]$ be as in \eqref{eq3.14}, and let its singular values be as in \eqref{eq3.15}. If $\sigma_{n_2} > \sigma_{n_2+1}$ and $V_{22}$ is nonsingular, then the real RBMTLS solution exists and is unique. In this case, the real RBMTLS solution is given by
		\begin{equation}\label{eq3.17}
			X= \begin{bmatrix}
				R_{11}^{-1}R_{1d} \\
				0
			\end{bmatrix}+
			\begin{bmatrix}
				R_{11}^{-1}R_{12} \\
				-I_{n_2}
			\end{bmatrix}V_{12} V_{22}^{-1}.
		\end{equation}
	
	\end{theorem}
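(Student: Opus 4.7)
The plan is to leverage Theorem \ref{thm1} to reduce the RBMTLS problem to the corresponding real MTLS problem on $[C_a,C_b,D]$, and then to solve that real problem by a QR-then-SVD strategy. First, since $A_a$ has full column rank, Lemma \ref{lem1} gives that $C_a$ has full column rank, so the upper triangular block $R_{11}\in\R^{n_1\times n_1}$ obtained from the Householder reduction in \eqref{eq3.13} is nonsingular. This invertibility is what will eventually allow us to solve for $X_a$ once $X_b$ is known.

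Next, I would use the orthogonality of $Q$ to transform the constraint in \eqref{eq3.4} without changing the Frobenius norm of the perturbations. Writing $Q^T\widetilde{E}_b=[F_1^T,F_2^T]^T$ and $Q^T\widetilde{G}=[H_1^T,H_2^T]^T$, the constraint splits into
\[R_{11}X_a+(R_{12}+F_1)X_b=R_{1d}+H_1, \qquad (R_{22}+F_2)X_b=R_{2d}+H_2,\]
while the objective decomposes as $\norm{[F_1,H_1]}_F^2+\norm{[F_2,H_2]}_F^2$. Because $R_{11}$ is invertible, the first equation can be satisfied for any fixed $(X_b,F_2,H_2)$ by an appropriate choice of $X_a$, regardless of $F_1,H_1$. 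Hence at the optimum one must have $F_1=0$ and $H_1=0$, and the problem decouples to the standard real multidimensional TLS problem of minimizing $\norm{[F_2,H_2]}_F$ subject to $(R_{22}+F_2)X_b=R_{2d}+H_2$.

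The heart of the argument is to solve this residual TLS problem. The constraint $(R_{22}+F_2)X_b=R_{2d}+H_2$ is equivalent to requiring that the perturbed matrix $[R_{22}+F_2,R_{2d}+H_2]$ have rank at most $n_2$, which by Lemma \ref{lem3} applied to the SVD \eqref{eq3.14} forces
\[\norm{[F_2,H_2]}_F^2 \ge \sum_{i=n_2+1}^{n_2+d}\sigma_i^2,\]
with equality only for the best rank-$n_2$ approximant $[R_{22},R_{2d}]-U_2\Sigma_2[V_{12}^T,V_{22}^T]$. The gap condition $\sigma_{n_2}>\sigma_{n_2+1}$ makes this approximant unique, and the nonsingularity of $V_{22}$ is precisely what allows the null-space relation encoded in this rank-$n_2$ matrix to be solved for $X_b$, yielding the classical formula $X_b=-V_{12}V_{22}^{-1}$. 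I expect this to be the main obstacle: one has to verify carefully that the multidimensional ($d\ge 1$) TLS uniqueness hypotheses really do collapse to the two conditions stated in the theorem, and that no spurious minimizer hides in the interface between the decoupled blocks.

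Finally, substituting $X_b=-V_{12}V_{22}^{-1}$ into $R_{11}X_a=R_{1d}-R_{12}X_b$ gives
\[X_a=R_{11}^{-1}R_{1d}+R_{11}^{-1}R_{12}V_{12}V_{22}^{-1},\]
so that stacking $X=[X_a^T,X_b^T]^T$ recovers the closed form \eqref{eq3.17}. Uniqueness of $X$ follows from the uniqueness of $X_b$ together with the invertibility of $R_{11}$, and Theorem \ref{thm1} then identifies this real $X$ as the unique real RBMTLS solution.
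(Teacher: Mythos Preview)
Your proposal is correct and follows essentially the same route as the paper: invoke Theorem~\ref{thm1} to pass to the real MTLS problem, use the Householder/QR reduction \eqref{eq3.13} together with Lemma~\ref{lem1} to decouple the problem (keeping the top block unperturbed and isolating a reduced real TLS problem on $[R_{22},R_{2d}]$), then apply Eckart--Young (Lemma~\ref{lem3}) under the gap condition $\sigma_{n_2}>\sigma_{n_2+1}$ and the nonsingularity of $V_{22}$ to obtain $X_b=-V_{12}V_{22}^{-1}$, and finally back-substitute for $X_a$. The only cosmetic difference is that you justify the decoupling by splitting the Frobenius norm of $Q^T[\widetilde{E}_b,\widetilde{G}]$ into block contributions and arguing $F_1=H_1=0$ at the optimum, whereas the paper phrases the same step as a rank-reduction argument on the modified matrix $\widetilde{R}$; both lead to the identical reduced TLS subproblem and the same closed form \eqref{eq3.17}.
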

	\begin{proof}
	Using Theorem \ref{thm1}, $X$ represents an RBMTLS solution if and only if $X$ is a real MTLS solution. Therefore, to find the RBMTLS solution, we find the real MTLS solution. Now, the real linear system corresponding to \eqref{eq1.1} is given by
		\begin{equation*}
			\left[C_a, C_b\right]\begin{bmatrix}
				X_a \\
				X_b
			\end{bmatrix} \approx D, \; \; \; 	\left[C_a, C_b, D\right]\begin{bmatrix}
				X_a \\
				X_b \\
				-I_d
			\end{bmatrix} \approx 0.
		\end{equation*}
	To find the real MTLS solution, we modify the above system in such a way that it becomes compatible. We achieve this by perturbing matrices $C_b$ and $D$ while keeping matrix $C_a$ exact, as in \eqref{eq3.4}. By pre-multiplying both sides of the above system by $Q^T$ and using \eqref{eq3.13}, we get
	\begin{eqnarray*}
		[Q_1, Q_2]^T [C_a, C_b, D]
		\begin{bmatrix}
			X_a \\
			X_b \\
			-I_d
		\end{bmatrix}\approx 0, \; \; \; 
		\begin{bmatrix}
			R_{11} & R_{12} & R_{1d} \\
			0         & R_{22} & R_{2d}
		\end{bmatrix}\begin{bmatrix}
			X_a \\
			X_b \\
			-I_d
		\end{bmatrix} \approx 0.
	\end{eqnarray*}
Let
	 $$R:= \begin{blockarray}{c@{}ccc@{\hspace{4pt}}cl}
		&  \mLabel{n_1} & \mLabel{n_2} & \mLabel{d} & &\\
		\begin{block}{[c@{\hspace{5pt}}ccc@{\hspace{5pt}}c]l}
			& R_{11} & R_{12} & R_{1d} & & \mLabel{n_1} \\
			& 0         & R_{22} & R_{2d} & & \mLabel{4m-n_1} \\
		\end{block}
	\end{blockarray}.$$
To make the above system compatible, the matrix $[X_a^T, X_b^T, -I_d]^T$ should be in the null space of $R$. Therefore, by the rank-nullity theorem, the rank of the matrix $R$ must be reduced to $n_1 +n_2$. We achieve this by modifying matrix $R$. To keep matrix $C_a$ exact, we modify matrix $R$ without perturbing the matrix $R_{11}$. 
 
 Now, matrix $A_a$ has full column rank. In view of Lemma \ref{lem1}, the matrix $C_a$ also has full column rank $n_1$, which implies that $R_{11}$ is a nonsingular upper triangular matrix. As a result, modifying $R_{12}$ and $R_{1d}$ does not affect the rank of the matrix $R$. Consequently, we do not modify these matrices. Instead, we modify matrices $R_{22}$ and $R_{2d}$. 
	
		Let $\widetilde{R}:= \begin{bmatrix}
			R_{11} & R_{12} & R_{1d} \\
			0         & \widetilde{R}_{22} & \widetilde{R}_{2d}
		\end{bmatrix}$ be the modified matrix such that the system $\widetilde{R}[X_a^T, X_b^T, -I_d]^T=0$ is compatible. Now our aim is to find $ \widetilde{R}_{22}$ and $\widetilde{R}_{2d}$. We first focus on the reduced real TLS problem $R_{22}X_b \approx R_{2d}.$ We have $$[R_{22}, R_{2d}] \begin{bmatrix}
	X_b\\
	-I_d
\end{bmatrix} \approx 0.$$ 
To find a solution to the reduced real TLS problem, the matrix $\left[X_b^T, -I_d\right]^T$ should be in the null space of $[R_{22}, R_{2d}]$. Therefore, by the rank-nullity theorem, the rank of the matrix $[R_{22}, R_{2d}]$ must be reduced to $n_2$. Let $[\widetilde{R}_{22}, \widetilde{R}_{2d}]$ denote the best rank $n_2$ approximation of $[R_{22}, R_{2d}]$. By Lemma \ref{lem3}, we have $$[\widetilde{R}_{22}, \widetilde{R}_{2d}]  = [U_1 \Sigma_1 V_{11}^T, U_1 \Sigma_1 V_{21}^T].$$ If $\sigma_{n_2}> \sigma_{n_2+1}$, then $[\widetilde{R}_{22}, \widetilde{R}_{2d}]$ represents the unique rank $n_2$ approximation of $[R_{22}, R_{2d}]$, and the columns of the matrix
		$\begin{bmatrix}
			V_{12} \\
			V_{22}
		\end{bmatrix}$ represent a basis for the null space of $[\widetilde{R}_{22}, \widetilde{R}_{2d}]$. We have
	\begin{equation*}
		[\widetilde{R}_{22}, \widetilde{R}_{2d}]\begin{bmatrix}
			V_{12} \\
			V_{22}
		\end{bmatrix}=0.
	\end{equation*}
If $V_{22}$ is nonsingular, then we get
	\begin{equation*}
	[\widetilde{R}_{22}, \widetilde{R}_{2d}]\begin{bmatrix}
		-V_{12}V_{22}^{-1} \\
		 -I_d
	\end{bmatrix}=0.
\end{equation*}
Hence, the reduced real TLS solution is unique and is given by $X_b= -V_{12}V_{22}^{-1}$. Notice that the rank of the modified matrix $\widetilde{R}$ is $n_1+n_2$.
		
		After computing $X_b$, we calculate $X_a$. We have $\widetilde{R}[X_a^T, X_b^T, -I_d]^T=0$. Since $R_{11}$ is nonsingular, we obtain a unique solution $X_a= R_{11}^{-1} (R_{1d}- R_{12}X_b)$. 
	\end{proof}
\begin{remark}\label{rem2}
The perturbation $\widetilde{E}_b$ to the matrix $C_b$ is given by $\widetilde{E}_b= \widetilde{C}_b- C_b,$ and the perturbation $\widetilde{G}$ to the matrix $D$ is given by $\widetilde{G}= \widetilde{D}-D$. We have
\begin{equation*}
[C_a, \widetilde{C}_b, \widetilde{D}]=	[Q_1, Q_2]\widetilde{R} = [Q_1, Q_2] \begin{bmatrix}
	R_{11} & R_{12} & R_{1d} \\
	0         & \widetilde{R}_{22} & \widetilde{R}_{2d}
\end{bmatrix}.
\end{equation*}  We obtain the perturbed matrices $\widetilde{C}_b:= Q_1 R_{12}+ Q_2 \widetilde{R}_{22}$ and $\widetilde{D}:= Q_1 R_{1d}+ Q_2 \widetilde{R}_{2d}$, where $\widetilde{R}_{22} = U_1 \Sigma_1 V_{11}^T$ and $\widetilde{R}_{2d} = U_1 \Sigma_1 V_{21}^T$. Now, we can obtain $\widehat{E}_b$ from $\widetilde{E}_b$ and $\widehat{G}$  from $\widetilde{G}$ using Theorem \ref{thm1}.
\end{remark}
	\noindent
	\textbf{Algebraic technique for RBTLS problem:}\\
	In the case where all columns of matrix $A$ are contaminated by noise (i.e., $n_1=0$ and $n_2=n$), the RBMTLS problem \eqref{eq3.3} simplifies to an RBTLS problem \eqref{eq1.3}. In this scenario, we have $A_a=0$ and $A_b=A$, as well as $C_a=0$ and $C_b=C$.
	Let $C= [
	A_{0}^T, A_{1}^T, A_{2}^T, A_{3}^T]^T \in \R^{4m \times n}$. Consider a multidimensional  real TLS problem 
	\begin{equation}\label{eq3.18}
		\min_{X,\widetilde{E}, \widetilde{G}} \norm{[\widetilde{E}, \widetilde{G}]}_F \; \; \; \textrm{subject to} \; \; \;  (C+\widetilde{E})X=D+\widetilde{G}.
	\end{equation}
Once a minimizing $[\widetilde{E}, \widetilde{G}]$ is found, then any $X$ which solves the corrected system in \eqref{eq3.18} is called the real TLS solution.

\noindent
In the forthcoming results on the RBTLS solution, we will be using the following notations: Let $\widetilde{E}= [E_{0}^T, E_{1}^T, E_{2}^T, E_{3}^T]^T \in \R^{4m \times n}$ and $\widetilde{G}= [G_0^T, G_1^T, G_2^T, G_3^T]^T \in \R^{4m \times d}$, where $E_t \in \R^{m \times n}$ and $G_t \in \R^{m \times d}$ for $t=0,1,2,3.$  In the following corollary, we provide the solution technique for the RBTLS problem \eqref{eq1.3}. 
	\begin{corollary}\label{cor1}
		Consider the RBTLS problem \eqref{eq1.3} and the real TLS problem \eqref{eq3.18}. Let $X$ be a real matrix. Then, $X$ is an RBTLS solution if and only if $X$ is a real TLS solution. In this case, if $X$ represents a real TLS solution, then there exist $\widetilde{E}$ and $\widetilde{G}$ such that 
	\begin{equation*}
		\norm{[\widetilde{E}, \widetilde{G}]}_F= \min, \; \;  (C+ \widetilde{E})X= D+ \widetilde{G}.
	\end{equation*}
Let $\widehat{E}= E_{0}+ E_{1}\i+ E_{2}\j+ E_{3}\k \in \QR^{m \times n}$ and $\widehat{G}= G_0+ G_1\i+ G_2\j+ G_3\k \in \QR^{m \times d}$. Then,
	\begin{equation*}
		\norm{[\widehat{E}, \widehat{G}]}_F= \min, \; \; (A+ \widehat{E})X= B+ \widehat{G}.
	\end{equation*}
	Therefore, there exist $\widehat{E}$ and $\widehat{G}$ such that $X$ is an RBTLS solution. 
	\end{corollary}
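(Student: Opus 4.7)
The plan is to obtain Corollary \ref{cor1} as a direct specialization of Theorem \ref{thm1} to the boundary case $n_1=0$, $n_2=n$. In this setting the block $A_a$ (and correspondingly $C_a$) has zero columns, so $A_aX_a$ drops out of the constraint in \eqref{eq3.3}, leaving $(A_b+\widehat{E}_b)X_b = B+\widehat{G}$, which is exactly the RBTLS constraint in \eqref{eq1.3} after renaming $A_b \to A$, $X_b \to X$, $\widehat{E}_b \to \widehat{E}$. The same renaming turns the real MTLS problem \eqref{eq3.4} into the real TLS problem \eqref{eq3.18}. The full-column-rank assumption on $A_a$ in Theorem \ref{thm1} becomes vacuous, so no additional hypothesis is needed.

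For the forward direction, I would take a real TLS solution $X$ together with a minimizing pair $(\widetilde{E}, \widetilde{G})$ as in \eqref{eq3.18}. Splitting $\widetilde{E} = [E_0^T, E_1^T, E_2^T, E_3^T]^T$ and $\widetilde{G} = [G_0^T, G_1^T, G_2^T, G_3^T]^T$ into their four real blocks, assemble the reduced biquaternion matrices $\widehat{E} = E_0 + E_1 \i + E_2 \j + E_3 \k$ and $\widehat{G} = G_0 + G_1 \i + G_2 \j + G_3 \k$. Then, exactly as in the proof of Theorem \ref{thm1}, one observes that the real identity $(C+\widetilde{E})X = D+\widetilde{G}$ extends via \eqref{eq2.3} to the full block equation $\widehat{A}^R X^R = \widehat{B}^R$ with $\widehat{A} := A+\widehat{E}$ and $\widehat{B} := B+\widehat{G}$. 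Using $(\widehat{A}X)^R = \widehat{A}^R X^R$ and the injectivity $\widehat{A}X=\widehat{B} \iff \widehat{A}^R X^R = \widehat{B}^R$, this collapses to $(A+\widehat{E})X = B+\widehat{G}$. The norm equality $\norm{[\widehat{E},\widehat{G}]}_F = \norm{[\widetilde{E},\widetilde{G}]}_F$ then follows from Lemma \ref{lem2} applied to $[\widehat{E},\widehat{G}]$, its real representation, and its first block column $[\widetilde{E},\widetilde{G}]$, so minimality is preserved.

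The converse direction is symmetric: given any RBTLS minimizer $(\widehat{E},\widehat{G})$ for a real $X$, the map $\widehat{E} \mapsto (\widehat{E})_c^R = \widetilde{E}$ and $\widehat{G} \mapsto \widehat{G}_c^R = \widetilde{G}$ produces real matrices that satisfy the real TLS constraint (obtained by reading the first block column of the real representation of $(A+\widehat{E})X = B+\widehat{G}$), and Lemma \ref{lem2} again shows that the Frobenius norms agree, so minimizers map to minimizers.

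I do not anticipate a real obstacle: every step in the proof of Theorem \ref{thm1} from \eqref{eq3.5} through \eqref{eq3.12} uses $A_a$ only as a (possibly zero-width) appended block, so the chain of implications remains valid when $n_1=0$. The only mild care required is notational bookkeeping to verify that the partitioned matrices collapse cleanly when $A_a$ and $X_a$ are absent; this is routine and can be handled in a few lines, after which the corollary is immediate.
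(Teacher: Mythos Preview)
Your proposal is correct and follows exactly the paper's own approach: the paper proves this corollary in one line by taking $n_1=0$, $n_2=n$ and noting that the proof is then identical to that of Theorem~\ref{thm1}. Your write-up simply spells out in more detail how the specialization collapses the partitioned matrices and how Lemma~\ref{lem2} transfers minimality, which is consistent with (and a faithful expansion of) the paper's argument.
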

	\begin{proof}
		By taking $n_1= 0$ and $n_2= n$, the proof follows similar to the proof of Theorem \ref{thm1}.
	\end{proof}
	Next, we derive an explicit expression for the real RBTLS solution $X$. By taking $n_1=0$ and $n_2=n$, equations \eqref{eq3.13}, \eqref{eq3.14}, \eqref{eq3.15}, and \eqref{eq3.16} are simplified to
	\begin{eqnarray}\label{eq3.19}
		\begin{blockarray}{ccc@{}cc@{\hspace{4pt}}cl}
			&&& \mLabel{n} & \mLabel{d} & &\\
			\begin{block}{cc@{\hspace{3pt}}c@{\hspace{9pt}}[cc@{\hspace{5pt}}c]l}
			&Q^T [C, D]&=	& R_{22} & R_{2d} & & \mLabel{4m} \\
			\end{block}
		\end{blockarray}.
	\end{eqnarray}
Thus, the SVD of $[R_{22}, R_{2d}]$ is given by 
	\begin{equation}\label{eq3.20}
		[R_{22}, R_{2d}]= U \Sigma V^T,
	\end{equation} where $U$ and $V$ are real orthonormal matrices, $\Sigma= \diag(\sigma_1, \sigma_2, \ldots, \sigma_{n+d})$, and the singular values of $[R_{22}, R_{2d}]$ satisfy 
\begin{equation}\label{eq3.21}
\sigma_1 \geq \sigma_2 \geq \ldots \geq \sigma_{n} > \sigma_{n+1} \geq \ldots \geq \sigma_{n+d} > 0.
\end{equation} Partition $U$, $\Sigma$, and $V$ as
	\vspace{-0.5cm}
	\begin{equation}\label{eq3.22}
		\begin{blockarray}{ccc@{}cc@{\hspace{4pt}}cl}
			&&&  \mLabel{n} & \mLabel{4m-n} & & \\
			\begin{block}{cc@{\hspace{3pt}}c@{\hspace{9pt}}[cc@{\hspace{5pt}}c]l}
			&U&=	&U_1& U_2& & \mLabel{4m} \\
			\end{block}
		\end{blockarray}, \; 
		\Sigma= \begin{blockarray}{c@{}cc@{\hspace{4pt}}cl}
			&  \mLabel{n} & \mLabel{d} & &\\
			\begin{block}{[c@{\hspace{5pt}}cc@{\hspace{5pt}}c]l}
				&\Sigma_1& 0& & \mLabel{n} \\
				&0 & \Sigma_2 & & \mLabel{4m-n} \\
			\end{block}
		\end{blockarray}, \; 
		V= 	\begin{blockarray}{c@{}cc@{\hspace{4pt}}cl}
			&  \mLabel{n} & \mLabel{d} & &\\
			\begin{block}{[c@{\hspace{5pt}}cc@{\hspace{5pt}}c]l}
				&V_{11}& V_{12}& & \mLabel{n} \\
				&V_{21} & V_{22} & & \mLabel{d} \\
			\end{block}
		\end{blockarray}.
	\end{equation}
	In the following corollary, we present the conditions for the existence of a unique real RBTLS solution, and in this case, provide an explicit expression for the real RBTLS solution. 
	\begin{corollary}\label{cor2}
		With the notations in \eqref{eq3.19} and \eqref{eq3.22}, consider the RBTLS problem \eqref{eq1.3}. Let the SVD of $[R_{22}, R_{2d}]$ be as in \eqref{eq3.20}, and let its singular values be as in \eqref{eq3.21}. If $\sigma_{n} > \sigma_{n+1}$ and $V_{22}$ is nonsingular, then the real RBTLS solution exists and is unique. In this case, the real RBTLS solution is given by
		\begin{equation}\label{eq3.23}
			X= -V_{12} V_{22}^{-1}.
		\end{equation}
	\end{corollary}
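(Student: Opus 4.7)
The plan is to obtain this corollary by specializing Theorem \ref{thm2} to the degenerate case $n_1 = 0$, $n_2 = n$, combined with the RBTLS-to-TLS reduction already provided by Corollary \ref{cor1}. By Remark \ref{rem1}, the RBMTLS formulation \eqref{eq3.3} with $n_1 = 0$ coincides with the RBTLS problem \eqref{eq1.3}, and by Corollary \ref{cor1} a real matrix $X$ is an RBTLS solution if and only if it solves the real TLS problem \eqref{eq3.18}. So the task reduces to solving this real TLS problem associated with $[C, D]$.

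I would mirror the argument used in the proof of Theorem \ref{thm2}, specialized to this setting. Since $n_1 = 0$, no columns of $C$ need to be preserved exactly, so the Householder preprocessing in \eqref{eq3.13} is unnecessary: \eqref{eq3.19} simply records that $Q^T[C, D] = [R_{22}, R_{2d}]$, while the leading blocks $R_{11}$, $R_{12}$, $R_{1d}$ disappear. The compatibility condition for the auxiliary system $[R_{22}, R_{2d}]\,[X^T, -I_d]^T = 0$ then forces, via rank-nullity, a minimum Frobenius-norm perturbation of $[R_{22}, R_{2d}]$ to a matrix of rank $n$.

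By Lemma \ref{lem3}, the hypothesis $\sigma_n > \sigma_{n+1}$ guarantees that the unique best rank-$n$ approximation of $[R_{22}, R_{2d}]$ is $[U_1 \Sigma_1 V_{11}^T,\, U_1 \Sigma_1 V_{21}^T]$, and the columns of $[V_{12}^T, V_{22}^T]^T$ form a basis of its null space. When $V_{22}$ is nonsingular, right-multiplying this basis by $-V_{22}^{-1}$ produces a $(n+d)\times d$ matrix whose top $n$ rows are $-V_{12}V_{22}^{-1}$ and whose bottom $d$ rows are $-I_d$, whence the unique real TLS solution is $X = -V_{12}V_{22}^{-1}$. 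There is essentially no obstacle here; the only bookkeeping point is to verify that formula \eqref{eq3.17} of Theorem \ref{thm2} collapses correctly when $n_1 = 0$, since its first summand has no $X_a$ component and disappears, while its second summand reduces to $-I_n V_{12} V_{22}^{-1} = -V_{12} V_{22}^{-1}$, matching \eqref{eq3.23}.
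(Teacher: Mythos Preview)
Your proposal is correct and follows essentially the same approach as the paper, which simply states that the result follows from the proof of Theorem~\ref{thm2} by taking $n_1=0$ and $n_2=n$. Your additional bookkeeping (noting that the Householder step is vacuous and that \eqref{eq3.17} collapses to \eqref{eq3.23}) just makes the specialization explicit.
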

	\begin{proof}
			By taking $n_1= 0$ and $n_2= n$, the proof follows similar to the proof of Theorem \ref{thm2}.
	\end{proof}
\begin{remark}\label{rem3}
		The perturbation $\widetilde{E}$ to the matrix $C$ is given by $\widetilde{E}= \widetilde{C}- C,$ and the perturbation $\widetilde{G}$ to the matrix $D$ is given by $\widetilde{G}= \widetilde{D}-D.$ We have
		\begin{equation*}
			[\widetilde{C}, \widetilde{D}]=Q[\widetilde{R}_{22}, \widetilde{R}_{2d}].
		\end{equation*}
	We get the perturbed matrices $\widetilde{C}:= Q \widetilde{R}_{22}$ and $\widetilde{D}:= Q \widetilde{R}_{2d}$, where $ \widetilde{R}_{22}= U_1 \Sigma_1 V_{11}^T$ and $\widetilde{R}_{2d}= U_1 \Sigma_1 V_{21}^T.$ Now, we can obtain $\widehat{E}$ from $\widetilde{E}$ and $\widehat{G}$ from $\widetilde{G}$ using Corollary \ref{cor1}.
\end{remark}
   \noindent
	\textbf{Algebraic technique for RBLS problem:}\\
	When all columns of matrix A are error-free, i.e., $n_1 = n$ and $n_2 = 0$, the RBMTLS problem \eqref{eq3.3} becomes an RBLS problem \eqref{eq1.2}. In this scenario, we have $A_a=A$ and $A_b=0$, as well as $C_a=C$ and $C_b=0$.
	Let $C= [
	A_{0}^T, A_{1}^T, A_{2}^T, A_{3}^T]^T \in \R^{4m \times n}$. Consider a multidimensional real LS problem 
	\begin{equation}\label{eq3.24}
		\min_{X}\norm{CX-D}_F.
	\end{equation}
	In the following corollary, we provide the solution technique for RBLS problem \eqref{eq1.2}.
	\begin{corollary}\label{cor3}
	Consider the RBLS problem \eqref{eq1.2} and the real LS problem \eqref{eq3.24}.	Let $X$ be a real matrix. Then, $X$ is an RBLS solution if and only if $X$ is a real LS solution. In this case, the solution $X$ is given by
		\begin{equation}\label{eq3.25}
			X= C^{+}D+(I-C^{+}C)Z,
		\end{equation}
		where $Z$ is an arbitrary matrix of suitable size and the least squares solution with the least norm is $X= C^{+}D.$
	\end{corollary}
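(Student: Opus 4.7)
The plan is to mimic the reasoning used in Theorem \ref{thm1} and Corollary \ref{cor1}, but now specialized to the degenerate case $n_1=n$, $n_2=0$, where there is no perturbation $\widehat{E}$ to apply at all; only $\widehat{G}$ is allowed. In this setting, the constraint $AX=B+\widehat{G}$ forces $\widehat{G}=AX-B$, so the RBLS problem \eqref{eq1.2} collapses to $\min_X \|AX-B\|_F$, and analogously the real problem \eqref{eq3.24} is $\min_X \|CX-D\|_F$. Hence the corollary reduces to showing that, for a real $X$, minimizing $\|AX-B\|_F$ over $\QR$ is exactly the same problem as minimizing $\|CX-D\|_F$ over $\R$.

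First I would use the fact that when $X\in\R^{n\times d}$ is real, reduced biquaternion multiplication acts componentwise on the $1,\i,\j,\k$ parts, so
\begin{equation*}
AX-B=(A_0X-B_0)+(A_1X-B_1)\i+(A_2X-B_2)\j+(A_3X-B_3)\k.
\end{equation*}
Applying the Frobenius norm definition \eqref{eq2.1} entry by entry then yields
\begin{equation*}
\|AX-B\|_F^2=\sum_{t=0}^{3}\|A_tX-B_t\|_F^2=\|CX-D\|_F^2,
\end{equation*}
where $C=[A_0^T,A_1^T,A_2^T,A_3^T]^T$ and $D=[B_0^T,B_1^T,B_2^T,B_3^T]^T$. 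This identity shows that a real $X$ minimizes the reduced biquaternion residual if and only if it minimizes the real residual, establishing the ``if and only if'' claim. Conversely, any real solution $X$ of the real LS problem produces $\widehat{G}:=AX-B\in\QR^{m\times d}$ realizing the minimum in \eqref{eq1.2}, so the correspondence goes both ways.

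Once the equivalence is in hand, the explicit formula is immediate by invoking Lemma \ref{lem4} applied to $CX\approx D$: the general real LS solution is $X=C^{+}D+(I-C^{+}C)Z$ for arbitrary $Z$ of suitable size, and the least-norm choice is $X=C^{+}D$. This is exactly \eqref{eq3.25}. I anticipate no real obstacle here; the only point that needs a brief justification is the componentwise decomposition of the Frobenius norm of a reduced biquaternion residual when $X$ is real, which follows directly from \eqref{eq2.1} together with the equality property of reduced biquaternion numbers noted just before Lemma \ref{lem1}. The overall argument is therefore substantially shorter than the RBMTLS and RBTLS cases because no rank reduction and no SVD truncation are required: the absence of errors in $A$ eliminates the need to modify any block of the QR factor, and the problem degenerates to a standard real least squares problem on the real representation.
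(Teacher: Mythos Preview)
Your argument is correct. Both you and the paper finish by invoking Lemma~\ref{lem4} for the explicit formula, but the equivalence step is handled differently. The paper simply specializes Theorem~\ref{thm1} to $n_1=n$, $n_2=0$ and reads off the conclusion, whereas you bypass Theorem~\ref{thm1} entirely and give a direct componentwise computation showing $\|AX-B\|_F=\|CX-D\|_F$ for real $X$. Your route is more elementary and self-contained: it avoids the block real representation $A^R$ and the auxiliary matrices $Q_m,R_m,S_m$ used in the proof of Theorem~\ref{thm1}, and it makes transparent exactly why the norms coincide. The paper's route, on the other hand, emphasizes the structural uniformity of the three problems (RBMTLS, RBTLS, RBLS) as instances of a single framework, at the cost of invoking machinery heavier than this special case requires.
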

	\begin{proof}
		By taking $n_1= n$ and $n_2= 0$ in Theorem \ref{thm1}, we get that $X$ is an RBLS solution if and only if $X$ is a real LS solution. Using Lemma \ref{lem4}, we get the desired expression for the solution $X$.
	\end{proof}
	
	The results developed in this section can also be applied to several other special cases. The following remarks are in order.
	\begin{remark}
		When $d=1$, our results also include single-right-hand-side RBMTLS, RBTLS, and RBLS problems.
	\end{remark}
	\begin{remark}
	Complex matrix equations are special cases of reduced biquaternion matrix equations. Hence, our developed solution techniques are well-suited for finding the best approximate solution to $AX \approx B$ over complex fields.
		
		We take the real representation of matrix $A=A_0+ A_1\i  \in \C^{m \times n}$, where $A_t \in \R^{m \times n}$ for $t=0,1$, denoted by $\widetilde{A}^R$ as
		\begin{equation*}
			\widetilde{A}^R= \begin{bmatrix}
				A_0  &  -A_1  \\
				A_1   &   A_0  
			\end{bmatrix}.
		\end{equation*}
		Let $\widetilde{Q}_m= \begin{bmatrix}
			0      &   -I_m  \\
			I_m  &    0       
		\end{bmatrix}.$ Let $\widetilde{A}_c^R$ denote the first block column of the block matrix $\widetilde{A}^R$ i.e. $\widetilde{A}_c^R= [A_0^T, A_1^T]^T.$ We have $\widetilde{A}^R= [\widetilde{A}_c^R, \widetilde{Q}_m \widetilde{A}_c^R].$ By taking $\widetilde{A}^R$, $\widetilde{Q}_m$, and $\widetilde{A}_c^R$ as above, we can obtain results to solve the complex LS, TLS, and MTLS problems.
	\end{remark}
	\section{Numerical Verification}\label{sec4}
In this section, we present a numerical example to verify our findings. All calculations are performed on an Intel Core $i7-9700@3.00GHz/16GB$ computer using MATLAB $R2021b$ software.

	\begin{exam} \label{ex4.1}
	Let $F= F_0+F_1\i+F_2\j+F_3\k \in \QR^{m \times 50}$ $(m>50)$, where $F_i= randn(m,50) \in \R^{m \times 50}$ for $i=0,1,2,3$.	Let $X_0 = randn(50,35) \in \R^{50 \times 35}$. Take $G=FX_0$. Clearly, the reduced biquaternion matrix equation $FX = G$ is consistent, and $X_0$ is its solution. We partition the matrix $F$ as $F= [F_a, F_b],$ where $F_a \in \QR^{m \times 20}$ and $F_b \in \QR^{m \times 30}$. 
	
To assess the effectiveness of our proposed solution techniques in finding the best approximate solution to an inconsistent linear system, we intentionally introduce errors into the entries of matrices $F$ and $G$. This makes our original system inconsistent.
	
	 Let the error terms be denoted as $dA \in \QR^{m \times 20}$, $dB \in \QR^{m \times 30}$, and $dG \in \QR^{m \times 35}$. We represent the modified matrices as $A_a=F_a+dA$, $A_b=F_b+dB$, and $B=G+dG$. Consequently, we have an overdetermined linear system: 
	$$AX \approx B,$$ where $A=[A_a, A_b] \in \QR^{m \times 50}$ and $B \in \QR^{m \times 35}$ are known, and $X \in \R^{50 \times 35}$ is unknown. Now we will consider three different cases. In the first case, errors are introduced in matrices $F_b$ and $G$. In the second case, errors are introduced in matrices $F_a$, $F_b$, and $G$. Lastly, in the third case, errors are introduced only in matrix $G$.
	\begin{enumerate}[label=\textup{(\arabic*)}]
	\item First case:\\
	 Take $R = rand(65,65)$ and $E= 0.01\left(rand(m,65)R\right)$. Let 
	 \begin{eqnarray*}
	 	dA&=&0, \\
	 	dB &=& E(:,1:30)+E(:,1:30)\i+E(:,1:30)\j+E(:,1:30)\k, \\
	 	dG &=& E(:,31:65)+E(:,31:65)\i+E(:,31:65)\j+E(:,31:65)\k.
	 \end{eqnarray*} 
 We have $A_a = F_a, A_{b} = F_{b} + dB$, and $B = G + dG$. 
	\item Second case: \\
	Take $R = rand(85,85)$ and $E= 0.01\left(rand(m,85)R\right)$. Let 
	\begin{eqnarray*}
		dA &=& E(:,1:20)+E(:,1:20)\i+E(:,1:20)\j+E(:,1:20)\k, \\
		dB &=& E(:,21:50)+E(:,21:50)\i+E(:,21:50)\j+E(:,21:50)\k,\\
		 dG &=& E(:,51:85)+E(:,51:85)\i+E(:,51:85)\j+E(:,51:85)\k.
		  \end{eqnarray*}
		 We have $A_a = F_a + dA$, $A_b = F_b + dB$, and $B = G + dG.$ 
	\item Third case:\\
	Take $R = rand(35,35)$ and $E= 0.01\left(rand(m,35)R\right)$. Let 
	\begin{eqnarray*}
	dA&=&0,\\
	 dB&=&0, \\
	 dG &=& E+E\i+E\j+E\k. 
	\end{eqnarray*} 
We have $A_a = F_a$, $A_b=F_b$, and $B = G + dG$. 
	\end{enumerate}
In each of the three different cases, due to errors in matrices $A$ and $B$, there is no exact solution for system $AX \approx B$, so an approximate one is sought. In this example, we calculate the RBMTLS solution $(X_M)$, the RBTLS solution $(X_T)$, and the RBLS solution $(X_L)$ for the inconsistent system $AX \approx B$ in all three cases. 
	
	\textbf{Note:} To achieve the highest possible accuracy in the estimated $X$, it is important to obtain the solution by eliminating any errors that might be present in the entries of matrices $A$ and $B$. In all three cases, if we remove all errors from matrices $A$ and $B$, they become matrices $F$ and $G$, respectively. Therefore, $X_0$ represents the most accurate approximate solution for the system $AX \approx B$ in all three cases.
	
	Now we will calculate $X_M$, $X_T$, and $X_L$ using Theorem \ref{thm2}, Corollary \ref{cor2}, and Corollary \ref{cor3}, respectively. Let the errors be denoted as $\epsilon_1 = \|X_M-X_0\|_F$, $\epsilon_2 = \|X_T-X_0\|_F$, and $\epsilon_3 = \|X_L-X_0\|_F$. In this example, $m$ is any arbitrary number. We calculate errors $\epsilon_1$, $\epsilon_2$, and $\epsilon_3$ for different values of $m$. Since the input matrices are randomly generated, we calculate $\epsilon_1$, $\epsilon_2$, and $\epsilon_3$ by taking the average of errors obtained by solving our example twenty times for each $m$. 
\end{exam}

Figure \ref{fig} present comparisons for case $1$, case $2$, and case $3$, respectively, between $\epsilon_1$, $\epsilon_2$, and $\epsilon_3$. These comparisons are obtained by taking different values of $m$. For all values of $m$, we observe that in case $1$, $\epsilon_1 < \epsilon_2 < \epsilon_3$, while in case $2$, $\epsilon_2 < \epsilon_3 < \epsilon_1$. Lastly, in case $3$, $\epsilon_3 < \epsilon_2 < \epsilon_1$.
	\begin{figure}[H]
	\centering
	\subfigure[]{\includegraphics[width=0.51\textwidth]{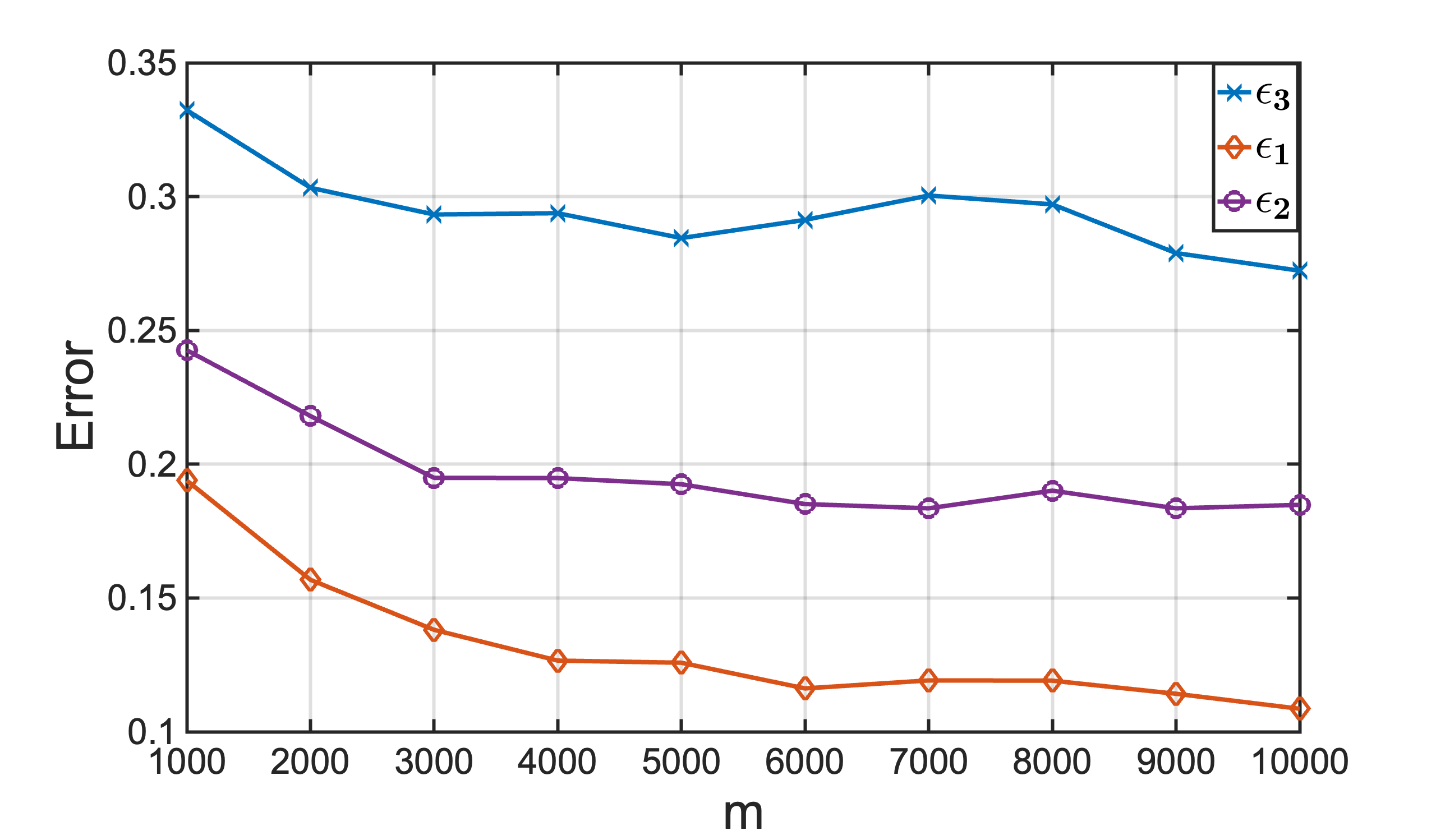}} 
	\hspace{-0.7cm}
	\subfigure[]{\includegraphics[width=0.51\textwidth]{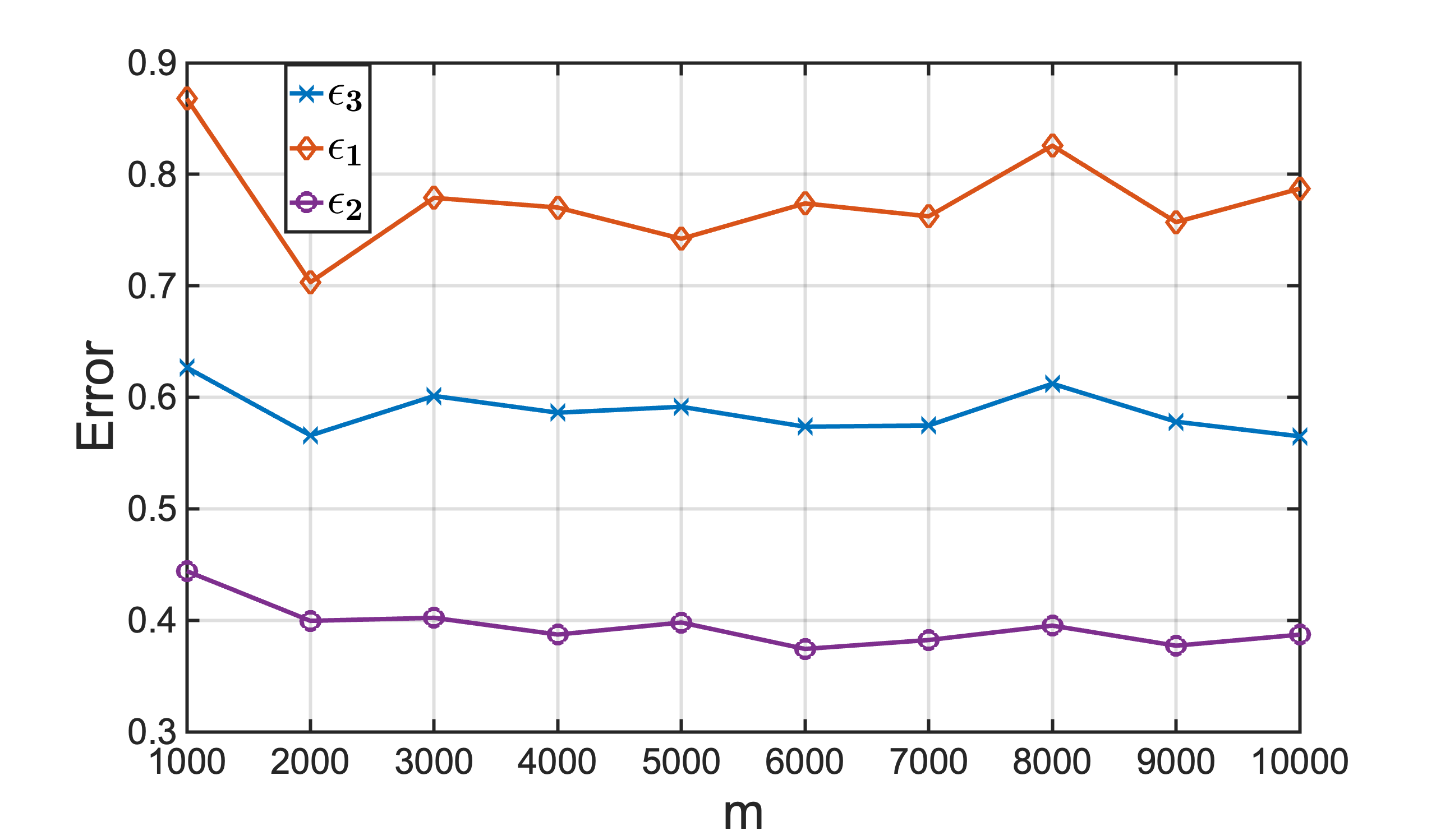}} 
	\subfigure[]{\includegraphics[width=0.51\textwidth]{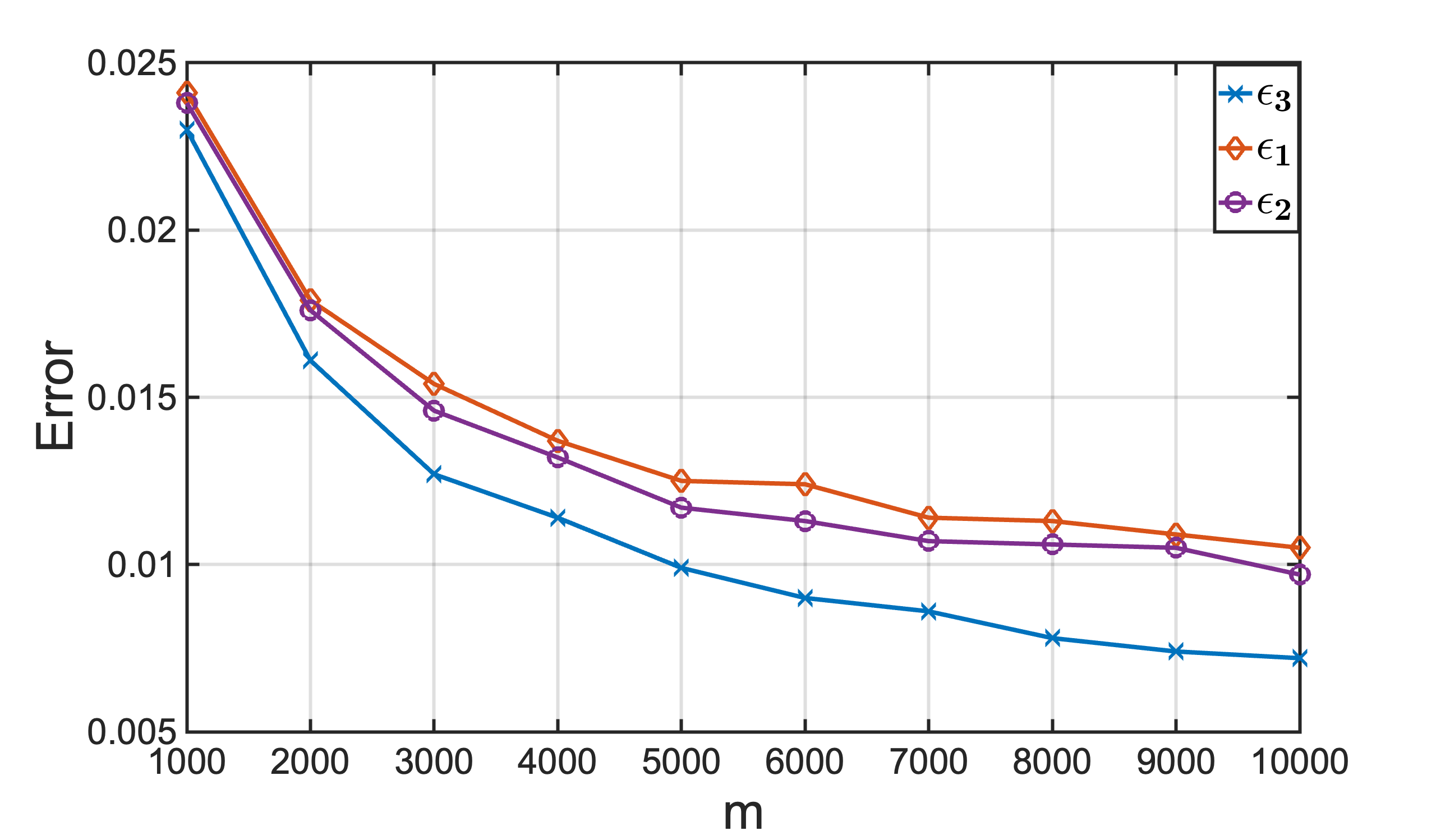}}
	\caption{The errors from the three solution techniques for (a) Case $1$ (b) Case $2$ (c) Case $3$}
	\label{fig}
\end{figure}
We conclude this example with the following remark:
	\begin{remark}
			\begin{enumerate}[label=\textup{(\arabic*)}]
		\item If there is an error in matrix $B$ along with a few columns of matrix $A$, then the RBMTLS solution technique offers the most accurate approximate solution to the overdetermined system $AX \approx B$.
		\item In cases where errors are present in both matrix $A$ and matrix $B$, the RBTLS solution technique yields the most accurate approximate solution to the overdetermined system $AX \approx B$.
		\item When the error is solely present in matrix $B$, the RBLS solution technique provides the most accurate approximate solution to the overdetermined system $AX \approx B$.
		\end{enumerate}
	\end{remark}
	\section{Conclusions} \label{sec5}
We have introduced a method to find the best approximate solution for an inconsistent linear system that arises in commutative quantum theory. An algebraic solution technique has been presented to address the RBMTLS problem. Our approach involved transforming the RBMTLS problem into a real MTLS problem using the real representation of reduced biquaternion matrices. This equivalence enabled us to deduce conditions for the existence of a unique real RBMTLS solution and provide expressions for the real RBMTLS solution. We have also proposed the RBTLS and RBLS solution techniques, which can be regarded as special cases of the RBMTLS solution technique. Additionally, our developed results encompass single-right-hand-side RBMTLS, RBTLS, and RBLS problems as special cases. Furthermore, the developed solution methods are applied to find the best approximate solution to the linear system $AX \approx B$ over complex field, showcasing their versatility in handling complex matrix equations, which are special cases of reduced biquaternion matrix equations. Finally, we have given a numerical example to verify our results. The developed methods for solving overdetermined linear systems will have potential applications in the study of digital signals and image processing in commutative quaternionic theory.
	
\section*{Acknowledgement}
	The second author is thankful to the Government of India for providing financial support through Prime Minister's Research Fellowship (PMRF). 
	\bibliography{p2lama}
\end{document}